\documentclass[fullpaper,9pt]{IEEEtran}

\usepackage{amsmath,amssymb,enumerate,subfigure,multirow,hhline,color}
\usepackage{xcolor}
\usepackage{hyperref}
\usepackage{graphicx}
\usepackage{graphics}
\usepackage[sort]{cite}
\usepackage{amsthm}
\usepackage{algorithm}
\usepackage{algpseudocode}

\usepackage{tcolorbox}
\usepackage{epsfig,psfrag}
\usepackage{tabularx}
\usepackage{tabulary}

\usepackage[sort]{cite}

\usepackage{hyperref}
\hypersetup{breaklinks=true,colorlinks=true,linkcolor=blue,citecolor=blue}

\usepackage[noabbrev,capitalise,nameinlink]{cleveref}

\DeclareMathOperator{\argmin}{arg\,min}

\newtheorem{theorem}{Theorem}

\newtheorem{assumption}{Assumption}

\newtheorem{problem}{Problem}
\newtheorem{lemma}{Lemma}
\newtheorem{definition}{Definition}
\newtheorem{proposition}{Proposition}

\allowdisplaybreaks

\title{Data-Driven Control Design with LMIs and Dynamic Programming}

\author{Donghwan Lee and Do Wan Kim
\thanks{D. Lee is with the Department of Electrical Engineering,
KAIST, Daejeon, 34141, South Korea {\tt\small
donghwan@kaist.ac.kr}.}
\thanks{D. Kim is with the Department of Electrical Engineering, Hanbat National University, Daejeon 34158, South Korea {\tt\small
dowankim@hanbat.ac.kr}.}
}

\begin{document}

\maketitle

\begin{abstract}
The goal of this paper is to develop data-driven control design and evaluation strategies based on linear matrix inequalities (LMIs) and dynamic programming. We consider deterministic discrete-time LTI systems, where the system model is unknown. We propose efficient data collection schemes from the state-input trajectories together with data-driven LMIs to design state-feedback controllers for stabilization and linear quadratic regulation (LQR) problem. In addition, we investigate theoretically guaranteed exploration schemes to acquire valid data from the trajectories under different scenarios. In particular, we prove that as more and more data is accumulated, the collected data becomes valid for the proposed algorithms with higher probability. Finally, data-driven dynamic programming algorithms with convergence guarantees are then discussed.
\end{abstract}
\begin{IEEEkeywords}
Optimal control, LTI system, data-driven design, reinforcement learning, linear matrix inequality, dynamic programming
\end{IEEEkeywords}

\section{Introduction}
Recently, reinforcement learning (RL)~\cite{sutton1998reinforcement} and data-driven control design have captured significant attentions due to its successful demonstrations that outperform humans in several challenging tasks~\cite{mnih2015human,lillicrap2015continuous}. The goal of this paper is to develop efficient data-driven control design methods for deterministic discrete-time linear-time invariant (LTI) systems. Two different lines of approaches are addressed: linear matrix inequalities (LMIs)~\cite{Boyd1994} and dynamic programming~\cite{bertsekas2005dynamic,bertsekas1996neuro}. In particular, we develop simple and efficient data-driven LMIs for stabilization and LQR problems along with data collection algorithms tailored to the proposed LMIs, which allow us to design controllers without the knowledge of the model. We also prove rigorously the optimality of the LMI solutions. Moreover, new data collection algorithms are developed, and we prove that these algorithms guarantee the validity of the data in the probabilistic sense, where the validity implies that it includes sufficient information for the proposed algorithms to successfully solve the given problems. Finally, additional data-driven dynamic programming algorithms are proposed based on the data collection algorithms with their convergence proofs. All these algorithms are sample efficient in the sense that once valid data is collected, then no more data is required to solve the problems completely.

{\bf Related works}: The previous works can be roughly categorized into two parts: RL (or data-driven dynamic programming) and data-based LMIs. As for RL, the early work~\cite{bradtke1994adaptive} proposed a Q-learning algorithm~\cite{watkins1992q} for discrete-time LIT systems, where the approximate Bellman equation is solved using the least-square method and trajectories. More comprehensive least-square reinforcement learning approaches were reported in~\cite{lewis2009reinforcement}. A model-based RL has been studied in~\cite{dean2020sample} for discrete-time LTI systems with sample complexity analysis. A policy gradient algorithm for LTI systems and its global convergence were provided in~\cite{fazel2018global}. An efficient online RL with guaranteed finite-time regret bounds has been proposed in~\cite{cohen2018online} based on a novel semidefinite programming relaxation. The paper~\cite{tu2019gap} proposed several model-based and model-free RLs. \cite{lee2018primal} proposed a policy iteration reinforcement learning based on the Lagrangian duality perspectives of the Bellman equation.

As for the data-based LMIs, several advances have been made recently in deriving numerically tractable data-based LMIs that enable direct data-driven control designs. Data-dependent LMIs were developed in~\cite{dai2018moments} for stabilization of switched systems. The paper~\cite{de2019formulas} introduced a data-dependent controller parameterization, and proposed data-based LMIs for stabilization and optimal control problems. The concept of informative data was introduced in~\cite{van2020data}, from which necessary and sufficient data-based conditions have been developed for various control problems.
The paper~\cite{berberich2020robust} proposed LMI conditions for control with guaranteed stability and performance by introducing a notion of noise bounds. Recently,~\cite{van2021matrix} introduced data-driven LMI conditions for stabilization problems based on a matrix version of the classical Finsler’s lemma~\cite{skelton1997unified}.

{\bf Contribution}: Compared to the previous works, the proposed data-driven LMIs provide more intuitive conditions with more memory efficient data structures and computational efficiency in terms of the size of LMIs. We additionally provide data-based LMIs for policy evaluations.
Moreover, new data generation schemes are developed with different scenarios. We prove that the new data collection approaches is guaranteed to be valid with probability one as more and more trajectories are accumulated. Lastly, data-driven dynamic programming schemes are briefly discussed, whose learning process is off-policy. The algorithms are sample efficient in the sense that once the data is collected, then no more samples are required.

{\bf Notation}: The adopted notation is as follows: ${\mathbb R}$: set of real numbers; ${\mathbb R}^n $: $n$-dimensional Euclidean
space; ${\mathbb R}^{n \times m}$: set of all $n \times m$ real
matrices; $A^T$: transpose of matrix $A$; $A^{-T}$: transpose of matrix $A^{-1}$; $A \succ 0$ ($A \prec
0$, $A\succeq 0$, and $A\preceq 0$, respectively): symmetric
positive definite (negative definite, positive semi-definite, and
negative semi-definite, respectively) matrix $A$; $I$: identity matrix with appropriate dimensions; ${\mathbb S} ^n $: symmetric $n \times
n$ matrices; ${\mathbb S}_+^n $: cone of symmetric $n \times n$
positive semi-definite matrices; ${\mathbb S}_{++}^n$:
symmetric $n\times n$ positive definite matrices; ${\bf Tr}(A)$:
trace of matrix $A$; $\rho(\cdot)$: spectral radius; ${\rm{diag}}(A_1 , \ldots ,A_n )$: block diagonal matrix with diagonal elements $A_1 , \ldots ,A_n$.
.

\section{Problem formulations and preliminaries}

Consider the LTI system
\begin{align}
&x(k + 1) = Ax(k) + Bu(k),\quad x(0) = z \in {\mathbb
R}^n,\label{eq:LTI-system}
\end{align}
where $k \in {\mathbb N}$, $x(k) \in {\mathbb R}^n$ is the state
vector, $u(k) \in {\mathbb R}^m$ is the input vector, and $z \in
{\mathbb R}^n$ is the initial state.

Assuming the control $u(k)$ is given by a state-feedback control
policy $u(k)=Fx(k)$, we denote by $x(k;F,z)$ the solution of
\eqref{eq:LTI-system} starting from $x(0)=z$. Under the
state-feedback control policy, the cost function for the classical
LQR problem is denoted by
\begin{align}
&J(F,z):= \sum_{k = 0}^\infty{ \begin{bmatrix}
   x(k;F,z)\\
   Fx(k;F,z)\\
\end{bmatrix}^T \Lambda \begin{bmatrix}
   x(k;F,z)\\
   Fx(k;F,z)\\
\end{bmatrix}},\label{eq:cost-function}
\end{align}
where $\Lambda := \begin{bmatrix}
   Q & 0\\
   0 & R\\
\end{bmatrix}\succeq 0$ is the weight matrix.

By introducing the augmented state vector $v(k):=
\begin{bmatrix}
  x(k)\\
  u(k)\\
\end{bmatrix}$, we will consider the augmented system
\begin{align}
&v(k+1)=A_F v(k),\quad v(0) = v_0  \in {\mathbb R}^{n +
m},\label{eq:augmented-system}
\end{align}
where $A_F := \begin{bmatrix}
  A &B \\
  FA &FB \\
\end{bmatrix} \in {\mathbb R}^{(n+m)\times(n+m)}$, which plays an important role throughout the paper. A useful
property of $A_F$ is that its spectral radius $\rho (A_F)$ is
identical to that of $A+BF$.
\begin{lemma}[\cite{lee2018primal}]\label{lemma:spetral-radius-lemma}
$\rho(A+BF)= \rho (A_F)$ holds.
\end{lemma}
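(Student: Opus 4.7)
The plan is to exhibit $A_F$ as a product of two rectangular matrices whose reverse product equals $A+BF$, and then invoke the standard fact that the nonzero spectra of $CD$ and $DC$ coincide.

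First I would introduce the blocks
\[
C := \begin{bmatrix} I_n \\ F \end{bmatrix} \in \mathbb{R}^{(n+m)\times n}, \qquad D := \begin{bmatrix} A & B \end{bmatrix} \in \mathbb{R}^{n \times (n+m)},
\]
and verify by a direct block multiplication that
\[
CD = \begin{bmatrix} A & B \\ FA & FB \end{bmatrix} = A_F, \qquad DC = A+BF.
\]
This is the only computational step and is routine.

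Next I would apply the well-known characteristic-polynomial identity for rectangular products: for $C \in \mathbb{R}^{p\times q}$ and $D \in \mathbb{R}^{q\times p}$ with $p \geq q$,
\[
\det(\lambda I_p - CD) = \lambda^{p-q}\,\det(\lambda I_q - DC).
\]
With $p = n+m$ and $q = n$ this gives $\det(\lambda I - A_F) = \lambda^m \det(\lambda I - (A+BF))$. Consequently the spectrum of $A_F$ consists of the spectrum of $A+BF$ together with an additional $m$ eigenvalues equal to zero (counted with multiplicity).

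Finally, since the spectral radius is the largest modulus among the eigenvalues and adjoining zeros to a nonnegative list leaves the maximum unchanged, I would conclude
\[
\rho(A_F) = \max_{\lambda \in \sigma(A_F)} |\lambda| = \max\bigl(\rho(A+BF),\, 0\bigr) = \rho(A+BF),
\]
which is the claim. There is essentially no obstacle here; the only thing worth flagging is the dimension bookkeeping in the rectangular determinant identity, but the block factorization $A_F = CD$ with $DC = A+BF$ does all the real work.
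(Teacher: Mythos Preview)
Your argument is correct: the factorization $A_F = CD$ with $C = \begin{bmatrix} I_n \\ F \end{bmatrix}$ and $D = \begin{bmatrix} A & B \end{bmatrix}$, together with the identity $\det(\lambda I_{n+m} - CD) = \lambda^m \det(\lambda I_n - DC)$, cleanly gives that $A_F$ and $A+BF$ share all nonzero eigenvalues, hence the same spectral radius.

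As for comparison, the paper does not supply its own proof of this lemma; it simply cites \cite{lee2018primal} and moves on. So there is no in-paper argument to compare your route against. Your proof is the standard one and would serve perfectly well as a self-contained justification here.
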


Define ${\cal F}$ as the set of all stabilizing state-feedback
gains of system $(A,B)$.
\begin{definition}[Stabilizing set]
The set of all stabilizing state-feedback gains of system $(A,B)$ is denoted by
\begin{align*}
{\cal F}:= \{F \in {\mathbb R}^{m \times n} :\rho(A+BF)<1\}
\end{align*}
\end{definition}

Note that ${\cal F}$ is an open set, and not necessarily convex~\cite[Lemma~2]{geromel1998static}. However, finding a
state feedback gain $F\in {\cal F}$ can be reduced to a simple
convex problem. In this paper, we study both the LQR
problem and stabilization problem.
\begin{problem}[Stabilization problem]\label{problem:stabilization}
Find a stabilizing feedback gain $F \in {\cal F}$.
\end{problem}
\begin{problem}[LQR problem]\label{problem:infinite-horizon-LQR}
Solve $F^*=\argmin_{F\in {\mathbb R}^{m \times n}} J(F,z)$ if the optimal value of $\inf_{F\in {\mathbb R}^{m \times n}} J(F,z)$ exists and is attained.
\end{problem}

From the standard LQR theory, although $J^* (F,z)$ has different
values for different $z \in {\mathbb R}^n$, the minimizer
$F^*=\argmin_{F \in {\mathbb R}^{m \times n}} J(F,\,z)$ is not dependent on $z$.
Therefore, it follows that $\argmin_{F \in {\mathbb R}^{m \times n}} J(F,\,z) = \argmin_{F \in {\mathbb R}^{m \times n}} \sum\limits_{i = 1}^r {J(F,\,z_i )}$ for any $z,z_i \in {\mathbb R}^n,\,i \in \{ 1,\,2, \ldots ,\,r\}$. For technical reasons that will become clear later, we solve
\begin{align*}
F^*:=\argmin _{F \in {\mathbb R}^{m \times n}} \sum_{i=1}^n {J(F,e_i )}
\end{align*}
instead of $\argmin_{F \in {\mathbb R}^{m \times n}} J(F,z)$, where $e_i \in {\mathbb R}^n$ is the $i$th standard basis vector.
Therefore, it will be useful to define a standard measure of the cost. In this paper, we will use the following cost index:
\[
J(F): = \sum_{i = 1}^n {J(F,e_i )}
\]

For a given $z\in {\mathbb R}^n$, if the optimal value of $\inf_{F \in {\mathbb R}^{m \times n}} J(F,z)$ exists and is attained,
then the optimal cost is denoted by $J^*(z)=J(F^*)$. Assumptions that will be used throughout the paper are summarized
below.
\begin{assumption}\label{assumption:basic-assumption}
Throughout the paper, we assume that
\begin{itemize}
\item $Q \succeq 0,R \succ 0$;

\item $(A,B)$ is stabilizable, and $Q$ can be written as $Q=C^T
C$, where $(A,C)$ is detectable.
\end{itemize}
\end{assumption}

Under~\cref{assumption:basic-assumption}, the optimal
value of $\inf_{F \in {\mathbb R}^{m \times n}} J(F)$ exists, is attained, and
$J^*(z)$ is a quadratic function, i.e., $J^*(z) = z^T X^* z$,
where $X^*$ is the unique solution of the algebraic Riccati
equation (ARE)~\cite[Proposition~4.4.1]{bertsekas2005dynamic} for $X$:
\begin{align*}
&X = A^T XA - A^T XB(R + B^T XB)^{-1} B^T XA + Q,\quad X\succeq 0.
\end{align*}
In this case, $J^*(z)$ as a function of $z\in {\mathbb R}^n$ is
called the optimal value function. The reader can refer to~\cite{bertsekas2005dynamic}
and~\cite{kwakernaak1972linear} for more details of the classical
LQR results. The corresponding optimal control policy is $u^* (z)
= F^* z$, where
\begin{align}
&F^*:=-(R + B^T X^* B)^{-1} B^T X^*A \in {\cal F}\label{eq:F*}
\end{align}
is the unique optimal gain. Alternatively, the
$Q$-function~\cite{bertsekas2005dynamic} is defined as
\begin{align}
&Q^*(z,u):=z^T Qz + u^T Ru + J^*(Az+Bu)= \begin{bmatrix}
   z  \\
   u  \\
\end{bmatrix}^T P^* \begin{bmatrix}
   z  \\
   u  \\
\end{bmatrix},\label{eq:Q-factor}
\end{align}
where
\begin{align}
&P^*:=\begin{bmatrix}
   Q + A^T X^* A & A^T X^* B \\
   B^T X^*A & R + B^T X^*B  \\
\end{bmatrix}.\label{eq:P*}
\end{align}
The optimal policy in terms of the Q-function is then given by
\begin{align*}
&u^*(z)=F^*z=\argmin _{u \in {\mathbb R}^m}Q^*(z,\,u).
\end{align*}

Before closing this section, some useful lemmas are summarized.
\begin{lemma}[\cite{skelton1997unified}]\label{lemma:Finsler} For a vector $x \in \mathbb{R}^n $ and two matrices $Q = Q^T\in\mathbb{R}^{n \times
n}$ and $R\in\mathbb{R}^{m \times n}$ such that ${\rm
rank}\left(R\right)<n$, the following statements are equivalent:
\begin{enumerate}[1)]
\item $x^T Qx < 0,\quad \forall x \in \left\{{x\in\mathbb{R}^n
\left| {x\ne 0,\,Rx = 0}\right.}\right\},$

\item $\exists M \in\mathbb{R}^{n \times m}$ such that $Q + MR +
R^T M^T \prec 0$.
\end{enumerate}
\end{lemma}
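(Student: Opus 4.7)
The plan is to establish the two directions separately, with the easy direction first and the harder direction via a compactness-based reduction to a scalar parameter.

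For the implication $2) \Rightarrow 1)$, I would simply substitute: if $Rx = 0$ and $x \neq 0$, then $x^T M R x = 0$ and $x^T R^T M^T x = 0$, so condition 2) gives $x^T Q x = x^T(Q + MR + R^T M^T) x < 0$. This requires no hypothesis on $\mathrm{rank}(R)$.

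For the nontrivial direction $1) \Rightarrow 2)$, I would exhibit the ansatz $M = -\tfrac{\alpha}{2} R^T$ for a sufficiently large scalar $\alpha > 0$. With this choice, $Q + MR + R^T M^T = Q - \alpha R^T R$, so the task reduces to proving the scalar claim that under 1) there exists $\alpha > 0$ with $Q - \alpha R^T R \prec 0$. I would prove this by contradiction: if no such $\alpha$ exists, then for each integer $k \geq 1$ one can pick $x_k \in \mathbb{R}^n$ with $\|x_k\| = 1$ and $x_k^T Q x_k \geq k \|R x_k\|^2$. By compactness of the unit sphere, a subsequence $x_{k_j}$ converges to some $x^*$ with $\|x^*\| = 1$. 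Since $x_k^T Q x_k$ is bounded (say by $\|Q\|$) and the inequality forces $\|R x_{k_j}\|^2 \leq \|Q\|/k_j \to 0$, we get $R x^* = 0$. Passing to the limit in $x_{k_j}^T Q x_{k_j} \geq k_j \|R x_{k_j}\|^2 \geq 0$ yields $(x^*)^T Q (x^*) \geq 0$, contradicting 1).

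The main obstacle is the contradiction step; in particular, one must be careful that the right-hand side $k \|R x_k\|^2$ does not blow up faster than $x_k^T Q x_k$, which is exactly what the boundedness of $\{x_k^T Q x_k\}$ on the compact unit sphere prevents. The rank assumption $\mathrm{rank}(R) < n$ is used only to ensure that condition 1) is nonvacuous; the argument itself does not rely on it. Once the scalar reduction $Q - \alpha R^T R \prec 0$ is established, constructing $M$ is immediate, completing the proof.
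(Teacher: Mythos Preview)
The paper does not supply its own proof of this lemma; it merely cites \cite{skelton1997unified}. Your argument is correct and is precisely the classical route: the easy direction by substitution, and the hard direction via the scalar ansatz $M=-\tfrac{\alpha}{2}R^T$ combined with a compactness/contradiction argument on the unit sphere to produce a nonzero $x^*$ in $\ker R$ with $(x^*)^T Q x^* \ge 0$. Your observation that the rank hypothesis only serves to make condition~1) nonvacuous is also accurate.
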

\begin{lemma}\label{lemma:bounding-lemma}
Given matrices $U,V$ of appropriate dimensions, the following holds for any $\varepsilon > 0$:
\[
 - \varepsilon^{- 1} U^T U - \varepsilon V^T V \preceq U^T V + V^T U \preceq \varepsilon ^{ - 1} U^T U + \varepsilon V^T V.
\]
\end{lemma}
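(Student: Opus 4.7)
The plan is to obtain both inequalities from a single source: the fact that any expression of the form $W^T W$ is positive semi-definite. Specifically, I would start by considering the two auxiliary matrices
\[
W_{\pm} := \varepsilon^{-1/2} U \pm \varepsilon^{1/2} V,
\]
which are well-defined because $\varepsilon > 0$. Since $W_{\pm}^T W_{\pm} \succeq 0$, expanding these two quadratic forms gives the two inequalities we need essentially for free.

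For the upper bound, I would expand $W_{-}^T W_{-} \succeq 0$, which yields
\[
\varepsilon^{-1} U^T U - U^T V - V^T U + \varepsilon V^T V \succeq 0,
\]
and rearranging immediately gives $U^T V + V^T U \preceq \varepsilon^{-1} U^T U + \varepsilon V^T V$. For the lower bound, I would expand $W_{+}^T W_{+} \succeq 0$, obtaining
\[
\varepsilon^{-1} U^T U + U^T V + V^T U + \varepsilon V^T V \succeq 0,
\]
which rearranges to $-\varepsilon^{-1} U^T U - \varepsilon V^T V \preceq U^T V + V^T U$. Combining the two produces the desired chain of matrix inequalities.

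There is essentially no obstacle here: the entire argument reduces to choosing the right square to expand. The only minor point to note is that the dimensions of $U$ and $V$ must be compatible so that $U^T V$ is well-defined, which is exactly the hypothesis that the matrices are of ``appropriate dimensions.'' No additional assumptions (such as symmetry, definiteness, or invertibility) are needed, and the scalar $\varepsilon > 0$ enters only through the scaling factors $\varepsilon^{\pm 1/2}$ in the construction of $W_{\pm}$.
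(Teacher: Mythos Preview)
Your proposal is correct and is essentially the same as the paper's own proof: the paper also expands $(\varepsilon^{-1/2} U \pm \varepsilon^{1/2} V)^T(\varepsilon^{-1/2} U \pm \varepsilon^{1/2} V) \succeq 0$ to obtain the two inequalities. There is no substantive difference in method or presentation.
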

\begin{proof}
The first inequality comes from $(\varepsilon ^{ - 1/2} U + \varepsilon ^{1/2} V)^T (\varepsilon ^{ - 1/2} U + \varepsilon ^{1/2} V) = \varepsilon ^{ - 1} U^T U + U^T V + V^T U + \varepsilon V^T V \succeq 0$ and the reversed inequality is obtained from $(\varepsilon ^{ - 1/2} U - \varepsilon ^{1/2} V)^T (\varepsilon ^{ - 1/2} U - \varepsilon ^{1/2} V) = \varepsilon ^{ - 1} U^T U - U^T V - V^T U + \varepsilon V^T V\succeq 0$. This completes the proof.
\end{proof}

\section{Data collection}
In this section, we introduce two data acquisition schemes, which will be used for the main algorithms. In particular,~\cref{algo:data-collection2} will be called an on-policy data collection algorithm with exploring starts, $(S(F),H(F))={\tt On-Collect}(F)$, where on-policy means that the generated data depends on a particular state-feedback gain $F$. The data generated by~\cref{algo:data-collection2} can be useful when we want to evaluate the specific state-feedback gain $F$, i.e., its stabilizability or the LQR performance. The exploring starts~\cite{sutton1998reinforcement} imply that for sufficient exploration of the state-space,~\cref{algo:data-collection2} needs trajectories starting from different initial states $x(0) = e_i$, $i=1,2,\ldots,n$, where $(e_1,e_2,\ldots, e_n)$ is the standard basis that spans the state-space, ${\mathbb R}^n$.  The following lemma offers a useful property of the data matrices.
\begin{lemma}[Data matrix transformation]\label{lemma:S-H2}
$S(F)A_F^T  = H(F)$ holds.
\end{lemma}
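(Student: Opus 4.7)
The plan is straightforward: invoke the augmented dynamics $v(k+1) = A_F v(k)$ from \eqref{eq:augmented-system} and assemble the claim row by row. First I would unpack the construction of $S(F)$ and $H(F)$ from \cref{algo:data-collection2}: because the algorithm runs trajectories with exploring starts $x(0) = e_i$ for $i = 1, \ldots, n$ under $u(k) = F x(k)$, the data matrices are naturally parsed as stacks of augmented-state samples $v(k; F, e_i) = [x(k;F,e_i)^T,\, (F x(k;F,e_i))^T]^T$, where $S(F)$ collects the $v(k;F,e_i)$ as rows (across $i$ and a finite window of $k$) and $H(F)$ collects the one-step successors $v(k+1;F,e_i)$ in the same order.

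Next I would pick an arbitrary row of $S(F)$, indexed by some pair $(i,k)$, which reads $v(k;F,e_i)^T$. Right-multiplying by $A_F^T$ gives $v(k;F,e_i)^T A_F^T = \bigl( A_F v(k;F,e_i) \bigr)^T = v(k+1;F,e_i)^T$, where the second equality is exactly the augmented dynamics \eqref{eq:augmented-system}. This coincides with the matching row of $H(F)$. Assembling this row-wise identity over every sample pair produces the matrix equation $S(F) A_F^T = H(F)$.

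There is no substantive obstacle here; the result is essentially a restatement of \eqref{eq:augmented-system} in matrix form. The only care required is bookkeeping: one must ensure that the row ordering used to build $S(F)$ and $H(F)$ inside \cref{algo:data-collection2} is consistent, so that the $\ell$th row of $H(F)$ is always the one-step successor of the $\ell$th row of $S(F)$. Once this pairing is in place, transposing the augmented dynamics closes the argument immediately.
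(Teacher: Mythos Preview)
Your core idea --- use the augmented dynamics $v(k+1)=A_F v(k)$ --- is exactly what the paper does, but your execution rests on a misreading of \cref{algo:data-collection2}. The matrices $S(F)$ and $H(F)$ are \emph{not} tall stacks whose rows are the samples $v(k;F,e_i)^T$. Each update in \cref{algo:data-collection2} adds the outer product $v(k)v(k)^T$ (respectively $v(k)v(k+1)^T$), so $S(F)$ and $H(F)$ are $(n+m)\times(n+m)$ empirical moment matrices:
\[
S(F)=\frac{1}{nN}\sum_{i=1}^n\sum_{k=0}^{N-1} v(k;F,e_i)v(k;F,e_i)^T,\qquad
H(F)=\frac{1}{nN}\sum_{i=1}^n\sum_{k=0}^{N-1} v(k;F,e_i)v(k+1;F,e_i)^T.
\]
Consequently there is no ``$\ell$th row indexed by a sample pair $(i,k)$'' to pick out, and the row-by-row argument you propose does not apply to these objects.

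The fix is minor. If you let $V$ be the tall matrix whose rows are the $v(k;F,e_i)^T$ (your intended object) and $V'$ the matrix of successors, then your argument correctly gives $VA_F^T=V'$. But since in fact $S(F)=\tfrac{1}{nN}V^TV$ and $H(F)=\tfrac{1}{nN}V^TV'$, you still need the extra step
\[
S(F)A_F^T=\tfrac{1}{nN}V^TVA_F^T=\tfrac{1}{nN}V^TV'=H(F).
\]
Equivalently --- and this is what the paper does --- argue term by term in the sum: $v(k)v(k)^TA_F^T=v(k)\bigl(A_F v(k)\bigr)^T=v(k)v(k+1)^T$. Either way, the bookkeeping concern you raise about consistent row ordering is moot once you work with the correct outer-product form.
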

\begin{proof}
We have
\begin{align*}
S(F)A_F^T  =& \frac{1}{{nN}}\sum\limits_{i = 1}^n {\sum\limits_{k = 0}^{N - 1} {\left[ {\begin{array}{*{20}c}
   {x(k;F,e_i )}  \\
   {u(k)}  \\
\end{array}} \right]\left[ {\begin{array}{*{20}c}
   {x(k;F,e_i )}  \\
   {u(k)}  \\
\end{array}} \right]^T } } A_F^T\\
=& \frac{1}{{nN}}\sum\limits_{i = 1}^n {\sum\limits_{k = 0}^{N - 1} {\left[ {\begin{array}{*{20}c}
   {x(k;F,e_i )}  \\
   {u(k)}  \\
\end{array}} \right]\left[ {\begin{array}{*{20}c}
   {x(k + 1;F,e_i )}  \\
   {u(k + 1)}  \\
\end{array}} \right]^T } }\\
=& H(F)
\end{align*}
\end{proof}

\begin{algorithm}[h]
\caption{On-policy data collection $(S(F),H(F))={\tt On-Collect}(F)$ with exploring starts}
\begin{algorithmic}[1]
\State Initialize $S_0  = 0,H_0  = 0$.

\For{$i \in \{1,2,\ldots,n\}$}
\State Initialize $x(0)=e_i$.
\For{$k \in \{0,1,\ldots,N-1\}$}
\State Apply control input $u(k)=Fx(k)$
\State Observe $x(k+1)$

\State Update
\[
S_{k + 1}  \leftarrow \frac{k}{{k + 1}}S_k  + \frac{1}{{k + 1}}\left[ {\begin{array}{*{20}c}
   {x(k)}  \\
   {u(k)}  \\
\end{array}} \right]\left[ {\begin{array}{*{20}c}
   {x(k)}  \\
   {u(k)}  \\
\end{array}} \right]^T
\]
\[
H_{k + 1}  \leftarrow \frac{k}{{k + 1}}H_k  + \frac{1}{{k + 1}}\left[ {\begin{array}{*{20}c}
   {x(k)}  \\
   {u(k)}  \\
\end{array}} \right]\left[ {\begin{array}{*{20}c}
   {x(k + 1)}  \\
   {u(k + 1)}  \\
\end{array}} \right]^T
\]

\EndFor
\EndFor

\State Return $(S(F),H(F)) = (S_N, H_N)$

\end{algorithmic}\label{algo:data-collection2}
\end{algorithm}
\begin{algorithm}[h]
\caption{Off-policy data collection $(S,H)={\tt Off-Collect}(z)$ with exploration}
\begin{algorithmic}[1]
\State Initialize $S_0  = 0,H_0  = 0$.
\State Initialize $x(0)=z$.
\State Initialize $\varepsilon >0$.

\For{$k \in \{0,1,\ldots\}$}
\State Apply control input $u(k)$ with some excitation input
\State Observe $x(k+1)$

\State Update
\[
S_{k + 1}  \leftarrow \frac{k}{{k + 1}}S_k  + \frac{1}{{k + 1}}\left[ {\begin{array}{*{20}c}
   {x(k)}  \\
   {u(k)}  \\
\end{array}} \right]\left[ {\begin{array}{*{20}c}
   {x(k)}  \\
   {u(k)}  \\
\end{array}} \right]^T
\]
\[
H_{k + 1}  \leftarrow \frac{k}{{k + 1}}H_k  + \frac{1}{{k + 1}}\left[ {\begin{array}{*{20}c}
   {x(k)}  \\
   {u(k)}  \\
\end{array}} \right]x(k + 1)^T
\]

\If{$\lambda _{\min } (S_{k + 1} ) > \varepsilon$
}
\State Stop and return $(S,H) = (S_{k+1},H_{k+1})$
\EndIf

\EndFor

\end{algorithmic}\label{algo:data-collection1}
\end{algorithm}

Another method,~\cref{algo:data-collection1}, is an off-policy data collection algorithm with exploration. Here, the off-policy indicates that the data generated by~\cref{algo:data-collection1} does not depend on a specific state-feedback gain, and it is particularly useful for design algorithms. Roughly speaking, the exploration means that it uses some exploration signals in control inputs to sufficiently explore the state-space so as to collect sufficient information on the model. Note that the data matrices, $(S,H)$, in~\cref{algo:data-collection1} can be expressed as
\[
S: = \frac{1}{N}\sum\limits_{k = 0}^{N - 1} {\left[ {\begin{array}{*{20}c}
   {x(k)}  \\
   {u(k)}  \\
\end{array}} \right]\left[ {\begin{array}{*{20}c}
   {x(k)}  \\
   {u(k)}  \\
\end{array}} \right]^T }
\]
\[
H: = \frac{1}{N}\sum\limits_{k = 0}^{N - 1} {\left[ {\begin{array}{*{20}c}
   {x(k)}  \\
   {u(k)}  \\
\end{array}} \right]x(k + 1)^T }
\]

Throughout the paper, we call the data generated by the data collection algorithms is valid if the $S$-matrix ($S(F)$ or $S$) is strictly positive definite. For completeness, the definition is formally stated below.
\begin{definition}[Data validity]
The data $(S,H)$ and $(S(F),H(F))$ generated by~\cref{algo:data-collection2} and~\cref{algo:data-collection1}, respectively, is said to be valid if $S > 0$ and $S(F) > 0$, respectively.
\end{definition}

The validity of the data ensures that all the proposed methods perform well, and completely solve the desired problems.
Due to the exploring starts in~\cref{algo:data-collection2}, we can prove that the data from~\cref{algo:data-collection2} is always valid for any $N >0$.
\begin{lemma}[Data validity of~\cref{algo:data-collection2}]
With a positive integer $N >0$, $S(F) \succ 0$ holds.
\end{lemma}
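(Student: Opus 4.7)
The plan is to express $S(F)$ as a finite-horizon Gramian driven by the exploring starts, and then to establish strict positive definiteness by combining the $k=0$ floor, which carries the spanning power of the standard basis, with the propagation through $A_F$ for $k \geq 1$.

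First, I would reuse the identity underpinning \cref{lemma:S-H2}: each on-policy augmented trajectory obeys $v(k+1;F,e_i) = A_F v(k;F,e_i)$ with $v(0;F,e_i) = \begin{bmatrix} e_i \\ F e_i \end{bmatrix}$, so that $v(k;F,e_i) = A_F^k \begin{bmatrix} e_i \\ F e_i \end{bmatrix}$. Substituting into the definition of $S(F)$ and using $\sum_{i=1}^n e_i e_i^T = I$, the sum factors as
\[
S(F) = \frac{1}{nN}\sum_{k=0}^{N-1} A_F^k V_0 V_0^T (A_F^T)^k, \qquad V_0 := \begin{bmatrix} I \\ F \end{bmatrix} \in \mathbb{R}^{(n+m)\times n}.
\]
Hence $S(F)$ is a Gramian generated by the Krylov sequence $\{A_F^k V_0\}_{k=0}^{N-1}$.

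Second, I would extract a positive semidefinite floor by retaining only the $k=0$ contribution, giving $S(F) \succeq \frac{1}{nN} V_0 V_0^T$. Since the exploring starts $e_1,\dots,e_n$ are the standard basis of $\mathbb{R}^n$, $V_0$ has full column rank $n$, so this floor is PSD with range equal to the column space of $V_0$; in particular, $\xi^T S(F)\xi > 0$ holds for every $\xi$ whose top block $a$ and bottom block $b$ satisfy $a + F^T b \neq 0$.

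Third, to upgrade this to $S(F) \succ 0$ across all of $\mathbb{R}^{n+m}$, I would recruit the Krylov iterates $A_F^k V_0$ for $k \geq 1$ and analyze their interaction with directions orthogonal to $\mathrm{Range}(V_0)$, namely $\xi = \begin{bmatrix} -F^T b \\ b \end{bmatrix}$ for $b \in \mathbb{R}^m$. Exploiting the explicit block structure $A_F = \begin{bmatrix} A & B \\ FA & FB \end{bmatrix}$, I would compute $A_F^T \xi$ and use the presence of $B$ in the off-diagonal block to show that the collective Gramian sum fills in the remaining $m$ input directions, from which the strict lower bound $\xi^T S(F) \xi > 0$ follows for every such $\xi$ and for every $N \geq 1$.

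The main obstacle is step three. The closed-loop similarity $A_F V_0 = V_0 (A+BF)$ suggests at first sight that every Krylov iterate $A_F^k V_0$ remains inside $\mathrm{Range}(V_0)$; overcoming this apparent obstruction is the crux, and will hinge on a careful block-algebraic analysis of $A_F$ that distinguishes the input channel from the state channel via the off-diagonal $B$ block, rather than on the naive Krylov span. Once this decomposition is in place, the strict positive definiteness of the Gramian sum follows directly from the independence of the exploring starts, yielding $S(F) \succ 0$ for every positive integer $N$.
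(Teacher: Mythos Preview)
Your step three cannot be completed: the obstruction you flag is real, not apparent. The identity you yourself record, $A_F V_0 = V_0(A+BF)$, propagates to $A_F^k V_0 = V_0(A+BF)^k$ for every $k\ge 0$, so that
\[
S(F)=\frac{1}{nN}\,V_0\Bigl(\sum_{k=0}^{N-1}(A+BF)^k\bigl((A+BF)^T\bigr)^{k}\Bigr)V_0^T,
\]
whose range is contained in $\mathrm{Range}(V_0)$, an $n$-dimensional subspace of $\mathbb{R}^{n+m}$. For any nonzero $b\in\mathbb{R}^m$ the vector $\xi=\begin{bmatrix}-F^Tb\\ b\end{bmatrix}$ satisfies $V_0^T\xi=0$ and hence $S(F)\xi=0$; in fact $A_F^T\xi=0$ as well, so the off-diagonal $B$ block you hoped to exploit contributes nothing in these directions. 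Under your reading of the algorithm, with $x(0)=e_i\in\mathbb{R}^n$ and $u(0)=Fx(0)$, the matrix $S(F)$ has rank at most $n$ and $S(F)\succ 0$ fails whenever $m\ge 1$.

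The paper's proof sidesteps this by identifying the initial \emph{augmented} state with $e_i\in\mathbb{R}^{n+m}$ and using $\sum_i e_ie_i^T=I_{n+m}$, which amounts to exploring starts $(x(0),u(0))=e_i$ ranging over a basis of the full augmented space rather than $x(0)=e_i\in\mathbb{R}^n$ together with $u(0)=Fx(0)$. Under that convention one obtains $S(F)=\tfrac{1}{nN}\sum_{k=0}^{N-1}A_F^k(A_F^T)^k\succeq \tfrac{1}{nN}I$ directly from the $k=0$ term. Your Gramian derivation is the correct one for the pseudocode as literally written, but it produces a rank-$n$ matrix, so the plan in step three cannot rescue the strict inequality; the discrepancy lies in which set of exploring starts is actually meant, not in a missing algebraic trick.
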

\begin{proof}
We have
\begin{align*}
S(F) =& \frac{1}{{nN}}\sum\limits_{i = 1}^n {\sum\limits_{k = 0}^{N - 1} {\left[ {\begin{array}{*{20}c}
   {x(k;F,e_i )}  \\
   {u(k)}  \\
\end{array}} \right]\left[ {\begin{array}{*{20}c}
   {x(k;F,e_i )}  \\
   {u(k)}  \\
\end{array}} \right]^T } }\\
=&\frac{1}{{nN}}\sum\limits_{i = 1}^n {\sum\limits_{k = 0}^{N - 1} {(A_F )^k e_i e_i^T (A_F^T )^k } }\\
=&\frac{1}{{nN}}\sum\limits_{k = 0}^{N - 1} {(A_F )^k (A_F^T )^k } \\
\succeq& I
\end{align*}
which completes the proof.
\end{proof}

On the other hand,~\cref{algo:data-collection1} cannot theoretically guarantee the validity. Therefore, we adopt the so-called persistent excitation assumption for~\cref{algo:data-collection1}, given below.
\begin{assumption}[Persistent excitation]\label{assumption:persistent-excitation}
There exists a positive integer $N>0$ such that $S\succ 0$ from~\cref{algo:data-collection1}.
\end{assumption}

We notice that it is typical to apply~\cref{assumption:persistent-excitation} in adaptive control and reinforcement learning community~\cite{bradtke1994adaptive,lewis2009reinforcement,aastrom2013adaptive}.
Moreover, in the last section, more sophisticated data collection algorithms will be developed, which theoretically guarantee the data validity with different scenarios. Finally, the following lemma will be useful throughout the paper.
\begin{lemma}[Data matrix transformation]\label{lemma:S-H} The following identity holds:
\[
S\left[ {\begin{array}{*{20}c}
   {A^T }  \\
   {B^T }  \\
\end{array}} \right] = H
\]
\end{lemma}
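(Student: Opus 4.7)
The plan is to argue by direct computation, exploiting the fact that the dynamics \eqref{eq:LTI-system} let us rewrite the inner product $(x(k)^T, u(k)^T)\begin{bmatrix} A^T \\ B^T \end{bmatrix}$ as $x(k+1)^T$. This is essentially the off-policy counterpart of \cref{lemma:S-H2}, but simpler: here the right factor is just $x(k+1)^T$ instead of the full augmented next-state vector, and correspondingly we multiply only by the first block-column of $A_F^T$, namely $\begin{bmatrix} A^T \\ B^T \end{bmatrix}$.

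The concrete steps would be as follows. First, I would substitute the explicit sum expression for $S$ given just before the definition of data validity, so that
\[
S\begin{bmatrix} A^T \\ B^T \end{bmatrix} = \frac{1}{N}\sum_{k=0}^{N-1} \begin{bmatrix} x(k) \\ u(k) \end{bmatrix} \begin{bmatrix} x(k) \\ u(k) \end{bmatrix}^T \begin{bmatrix} A^T \\ B^T \end{bmatrix}.
\]
Next, I would pull the matrix $\begin{bmatrix} A^T \\ B^T \end{bmatrix}$ inside the sum and observe that for each $k$,
\[
\begin{bmatrix} x(k) \\ u(k) \end{bmatrix}^T \begin{bmatrix} A^T \\ B^T \end{bmatrix} = x(k)^T A^T + u(k)^T B^T = (Ax(k)+Bu(k))^T = x(k+1)^T,
\]
where the last equality uses the system equation \eqref{eq:LTI-system}. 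Substituting this back into the sum yields exactly the expression for $H$.

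There is no real obstacle: the identity is a one-line consequence of the dynamics after interchanging the summation with the matrix multiplication. The only point worth being careful about is that the identity holds termwise for every sampled $(x(k),u(k),x(k+1))$ triple regardless of how the input $u(k)$ was chosen (with or without an excitation signal), because \eqref{eq:LTI-system} is satisfied by the true trajectories produced inside \cref{algo:data-collection1}. Thus the conclusion is independent of the particular exploration strategy used, and also independent of whether $S\succ 0$ holds.
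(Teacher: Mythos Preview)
Your proposal is correct and essentially identical to the paper's own proof: substitute the definition of $S$, distribute $\begin{bmatrix} A^T \\ B^T \end{bmatrix}$ into the sum, use the dynamics \eqref{eq:LTI-system} to convert $(Ax(k)+Bu(k))^T$ into $x(k+1)^T$, and recognize the result as $H$.
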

\begin{proof}
\begin{align*}
S\left[ {\begin{array}{*{20}c}
   {A^T }  \\
   {B^T }  \\
\end{array}} \right] =& \frac{1}{N}\sum\limits_{k = 0}^{N - 1} {\left[ {\begin{array}{*{20}c}
   {x(k)}  \\
   {u(k)}  \\
\end{array}} \right](Ax(k) + Bu(k))^T }\\
=& \frac{1}{N}\sum\limits_{k = 0}^{N - 1} {\left[ {\begin{array}{*{20}c}
   {x(k)}  \\
   {u(k)}  \\
\end{array}} \right]x(k + 1)^T }\\
=& H
\end{align*}
\end{proof}

\section{Data-driven LMIs for stabilization}
In this section, the main focus is on data-driven LMIs for stabilization, where the model $(A,B)$ is unknown. The main breakthrough in this approach lies in augmenting the state and input into a single augmented state as in~\eqref{eq:augmented-system}. Then, the model data $(A,B)$ can be eliminated using the data matrices $(S,H)$ or $(S(F),H(F))$ together with~\cref{lemma:S-H2} and~\cref{lemma:S-H}. We first consider a policy evaluation problem. In this setting, given a potentially unknown state-feedback gain $F$, we only have an access to the state-input trajectories. Under this situation, the problem is to determine whether or not the unknown feedback gain $F$ stabilizes the system.
\begin{proposition}[Stability evaluation]\label{proposition:stab-eval}
The system~\eqref{eq:LTI-system} is stabilizable under $u(k) = Fx(k)$ if and only if there exist $P \in {\mathbb S}^{(n+m)}$ such that the following LMI holds:
\[
H(F)^T PH(F) \prec S(F)PS(F),\quad P \succ 0
\]
\end{proposition}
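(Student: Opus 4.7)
The plan is to reduce the data-driven LMI to the standard discrete-time Lyapunov inequality for the augmented matrix $A_F$ by eliminating $H(F)$ using the identity $S(F)A_F^T = H(F)$ from \cref{lemma:S-H2}, and then to invoke \cref{lemma:spetral-radius-lemma} to translate $\rho(A_F)<1$ into $\rho(A+BF)<1$. The strict positive definiteness of $S(F)$, already established for \cref{algo:data-collection2}, is what makes the change of variables $W = S(F) P S(F)$ bijective on the cone of positive definite matrices, and this bijection is the linchpin of both directions of the equivalence.

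For the sufficiency direction, I would start by assuming that there exists $P\succ 0$ satisfying the stated LMI. Substituting $H(F) = S(F)A_F^T$ turns the LMI into
\[
A_F\, S(F) P S(F)\, A_F^T \prec S(F) P S(F).
\]
Defining $W := S(F) P S(F)$, and observing that $W\succ 0$ because $S(F)\succ 0$ and $P\succ 0$, this is exactly the discrete-time Lyapunov inequality $A_F W A_F^T - W \prec 0$. Standard Lyapunov theory then yields $\rho(A_F)<1$, and \cref{lemma:spetral-radius-lemma} gives $\rho(A+BF)<1$, so $F$ stabilizes the system.

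For the necessity direction, I would run the argument in reverse. Assuming $\rho(A+BF)<1$, \cref{lemma:spetral-radius-lemma} yields $\rho(A_F)<1$, so by the converse Lyapunov theorem there exists $W\succ 0$ with $A_F W A_F^T \prec W$. Since $S(F)\succ 0$ is invertible, set $P := S(F)^{-1} W S(F)^{-1}\succ 0$; then $S(F) P S(F) = W$, and multiplying the Lyapunov inequality by $S(F)$ on either side (which preserves strict inequality because $S(F)\succ 0$) and using $S(F)A_F^T = H(F)$ once more recovers $H(F)^T P H(F) \prec S(F) P S(F)$.

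There is no genuine obstacle; the only subtlety worth double-checking is that the change of variables $P\mapsto S(F) P S(F)$ is a congruence by an invertible matrix, hence preserves both positive definiteness and strict LMI inequalities in both directions, so the equivalence between the data-driven LMI in $P$ and the classical Lyapunov LMI in $W$ is exact rather than one-sided.
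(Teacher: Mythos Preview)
Your proposal is correct and follows essentially the same approach as the paper: substitute $H(F)=S(F)A_F^T$ from \cref{lemma:S-H2}, perform the congruence change of variables $W=S(F)PS(F)$ (bijective since $S(F)\succ 0$), and reduce to the standard discrete-time Lyapunov inequality for $A_F$. Your write-up is simply more explicit about both directions and about invoking \cref{lemma:spetral-radius-lemma}, whereas the paper compresses this into a two-line argument.
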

\begin{proof}
From the Lyapunov theory, $A_F^T$ is stabilizable if and only if  there exist $P\in {\mathbb S}^{(n+m)}_{++}$ such that $A_F P A_F^T \prec P$. Replacing $P$ with $SPS$ leads to the equivalent condition $A_F S(F) P S(F) A_F^T \prec S(F)PS(F)$. Using the relation in~\cref{lemma:S-H2} yields the conclusion.
\end{proof}

\cref{proposition:stab-eval} will be useful when we want to check if the unknown system is (asymptotically) stable. Its main feature is that it requires the on-policy data from~\cref{algo:data-collection2}. Note however that it does not require the knowledge of $F$.
Next, a stabilizing state-feedback control design algorithm is proposed using LMIs and data from~\cref{algo:data-collection1}.
\begin{proposition}[Stabilization]\label{proposition:stab-design}
The system~\eqref{eq:LTI-system} is stabilizable if and only if there exist $G\in {\mathbb R}^{n \times n}$, $P \in {\mathbb S}^{n+m}$, and ,$X \in {\mathbb R}^{n \times m}$, such that the following LMI holds:
\begin{align}
\left[ {\begin{array}{*{20}c}
   { - SPS} & *  \\
   {\left[ {\begin{array}{*{20}c}
   G & X  \\
\end{array}} \right]} & {H^T PH - G - G^T }  \\
\end{array}} \right] \prec 0\label{eq:1}
\end{align}
If a solution, $(\bar P,\bar G,\bar X)$, exists, then a stabilizing state-feedback gain is given by  $F = \bar X^T (\bar G^T )^{ - 1}$, and $V(x) = x^T S\bar PSx$ is the corresponding Lyapunov function of $A_F$.
\end{proposition}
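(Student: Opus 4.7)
The plan is to reformulate the LMI \eqref{eq:1}, using the data identity $[A\;B]=H^T S^{-1}$ from \cref{lemma:S-H}, as a Lyapunov inequality for the augmented closed-loop matrix $A_F$. Let $E:=\bigl[\begin{smallmatrix}I_n\\F\end{smallmatrix}\bigr]\in\mathbb{R}^{(n+m)\times n}$ with $F:=\bar X^T(\bar G^T)^{-1}$, so that $A_F=E[A\;B]$, and $\bar X=\bar G F^T$ gives $[\bar G\;\bar X]=\bar G[I_n\;F^T]=\bar G E^T$; thus the slack data in \eqref{eq:1} are really parametrized by the single $n\times n$ matrix $\bar G$. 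With $W:=S\bar P S$ and $J:=H^T\bar P H$, the pivotal identity (from $[A\;B]S=H^T$) is $A_F W A_F^T = E[A\;B]S\bar P S[A\;B]^T E^T = EH^T\bar P H E^T = EJE^T$, which ties the LMI data to $A_F$.

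For sufficiency, the $(1,1)$-block of \eqref{eq:1} forces $W\succ 0$, hence $\bar P\succ 0$. Applying Schur complement to the $(2,2)$-block yields $K:=\bar G+\bar G^T-J\succ 0$ together with $W\succ E\bar G^T K^{-1}\bar G E^T$. The crucial algebraic step is a completion of squares: since $K^T=K$,
\[
(K-\bar G)^T K^{-1}(K-\bar G)=K-\bar G-\bar G^T+\bar G^T K^{-1}\bar G=\bar G^T K^{-1}\bar G-J\succeq 0,
\]
so $\bar G^T K^{-1}\bar G\succeq J$. Congruence by $E$ then gives $E\bar G^T K^{-1}\bar G E^T\succeq EJE^T$, and chaining produces $W\succ EJE^T=A_F W A_F^T$. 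By the discrete-time Lyapunov theorem, $\rho(A_F)<1$, and \cref{lemma:spetral-radius-lemma} yields $\rho(A+BF)<1$; hence $F\in{\cal F}$, and $W=S\bar P S$ plays the role of the claimed Lyapunov certificate of $A_F$.

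For necessity, suppose $F\in{\cal F}$; by \cref{lemma:spetral-radius-lemma} there exists $P_0\succ 0$ with $A_F P_0 A_F^T\prec P_0$. Set $\bar P:=S^{-1}P_0 S^{-1}\succ 0$ (so $W=P_0$), and take $\bar G:=J+\varepsilon I_n$ and $\bar X:=\bar G F^T$, with $\varepsilon>0$ to be chosen. Then $K=J+2\varepsilon I\succ 0$; diagonalizing $\bar G^T K^{-1}\bar G=(J+\varepsilon I)(J+2\varepsilon I)^{-1}(J+\varepsilon I)$ in the eigenbasis of $J\succeq 0$ yields the scalar bound $(\mu+\varepsilon)^2/(\mu+2\varepsilon)=\mu+\varepsilon^2/(\mu+2\varepsilon)\le\mu+\varepsilon/2$ for every eigenvalue $\mu\ge 0$ of $J$, so $\bar G^T K^{-1}\bar G\preceq J+(\varepsilon/2)I$, and consequently $E\bar G^T K^{-1}\bar G E^T\preceq EJE^T+(\varepsilon/2)EE^T$. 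Since $EJE^T=A_F W A_F^T\prec P_0$, any $\varepsilon$ with $(\varepsilon/2)\lambda_{\max}(EE^T)<\lambda_{\min}(P_0-EJE^T)$ keeps the right-hand side $\prec P_0$; a reverse Schur complement then recovers \eqref{eq:1}.

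The main obstacle is the completion-of-squares identity $\bar G^T K^{-1}\bar G\succeq J$, which cleanly eliminates the slack variable $\bar G$ in the sufficiency direction and exposes the underlying Lyapunov inequality for $A_F$; the remaining work is routine Schur-complement manipulation combined with the data identity of \cref{lemma:S-H}.
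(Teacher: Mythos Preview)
Your proof is correct but follows a genuinely different route from the paper. The paper obtains the equivalence by casting the Lyapunov inequality $A_F P A_F^T \prec P$ in the null-space form required by Finsler's lemma (\cref{lemma:Finsler}); the slack variable $G$ then appears automatically as the Finsler multiplier, the change of variables $X=GF^T$ linearizes the condition, and the substitution $P\mapsto SPS$ together with \cref{lemma:S-H} produces \eqref{eq:1}. Both directions of the equivalence are thus handled simultaneously by the Finsler equivalence. You instead work directly with the Schur complement of \eqref{eq:1}: for sufficiency you eliminate the slack via the completion-of-squares identity $\bar G^T K^{-1}\bar G\succeq J$ (with $K=\bar G+\bar G^T-J$), and for necessity you construct an explicit feasible slack $\bar G=J+\varepsilon I$ and control the Schur complement by the scalar estimate $(\mu+\varepsilon)^2/(\mu+2\varepsilon)\le \mu+\varepsilon/2$. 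Your argument is more elementary in that it avoids Finsler's lemma altogether and makes transparent exactly how the slack $\bar G$ is ``absorbed'' into the Lyapunov inequality; the paper's approach is shorter and more systematic, since Finsler gives the bi-implication in one stroke without the asymmetric sufficiency/necessity split or the $\varepsilon$-perturbation analysis. Both rely on the same data identity $[A\;B]S=H^T$ and the spectral-radius transfer of \cref{lemma:spetral-radius-lemma}.
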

\begin{proof}
\cref{lemma:spetral-radius-lemma} tells us that the original system~\eqref{eq:LTI-system} is stabilizable if and only if the augmented system~\eqref{eq:augmented-system} is stabilizable, or equivalently $A_F$ is Schur. Moreover, from a standard result of the linear system theory, we know that $A_F$ is Schur if and only if the corresponding dual system $A_F^T$ is Schur. From the Lyapunov theory, the dual system $A_F^T$ is Schur if and only if there exists a Lyapunov matrix $P\in {\mathbb S}^{n+m}_{++}$ such that $A_F PA_F^T  \prec P$. The Lyapunv inequality can be expressed as
\begin{align*}
&\left[ {\begin{array}{*{20}c}
   I  \\
   {\left[ {\begin{array}{*{20}c}
   I & {F^T }  \\
\end{array}} \right]}  \\
\end{array}} \right]^T \left[ {\begin{array}{*{20}c}
   { - P} & 0  \\
   0 & {\left[ {\begin{array}{*{20}c}
   {A^T }  \\
   {B^T }  \\
\end{array}} \right]^T P\left[ {\begin{array}{*{20}c}
   {A^T }  \\
   {B^T }  \\
\end{array}} \right]}  \\
\end{array}} \right]\\
&\times \left[ {\begin{array}{*{20}c}
   I  \\
   {\left[ {\begin{array}{*{20}c}
   I & {F^T }  \\
\end{array}} \right]}  \\
\end{array}} \right] \prec 0
\end{align*}

From~\cref{lemma:Finsler} (Finsler lemma), we have that dual system $A_F^T$ is Schur if and only if there exist $P,F,G$ such that
\[
\left[ {\begin{array}{*{20}c}
   { - P} & *  \\
   {G\left[ {\begin{array}{*{20}c}
   I & {F^T }  \\
\end{array}} \right]} & {\left[ {\begin{array}{*{20}c}
   {A^T }  \\
   {B^T }  \\
\end{array}} \right]^T P\left[ {\begin{array}{*{20}c}
   {A^T }  \\
   {B^T }  \\
\end{array}} \right] - G - G^T }  \\
\end{array}} \right] \prec 0
\]
which is a non-convex bilinear matrix inequality. The first block diagonal matrix ensures $P \succ 0$, and the second block diagonal matrix implies $G + G^T \succ 0$. This guarantees that $G$ is nonsingular. With the change of variables, $X  = GF^T$, the last matrix inequality becomes
\begin{align}
\left[ {\begin{array}{*{20}c}
   { - P} & *  \\
   {\left[ {\begin{array}{*{20}c}
   G & X  \\
\end{array}} \right]} & {\left[ {\begin{array}{*{20}c}
   {A^T }  \\
   {B^T }  \\
\end{array}} \right]^T P\left[ {\begin{array}{*{20}c}
   {A^T }  \\
   {B^T }  \\
\end{array}} \right] - G - G^T }  \\
\end{array}} \right] \prec 0\label{eq:2}
\end{align}
Clearly, the above linear matrix inequality~\eqref{eq:2} holds if and only if the previous bilinear matrix inequality is satisfied from the bijective mapping $F^T = G^{-1}X$. Next, we replace $P$ with $SPS$ and use the identity~\cref{lemma:S-H} to obtain the LMI~\eqref{eq:1} in the statement. Note that~\eqref{eq:2} holds if and only if~\eqref{eq:1} because $S \in {\mathbb S}^{n+m}_{++}$ is nonsingular.
This completes the proof.
\end{proof}

Using the LMI condition in~\cref{proposition:stab-design}, a stabilizing state-feedback controller can be found only using the trajectories. Note that the data used in~\cref{proposition:stab-design} is generated from the off-policy method~\cref{algo:data-collection2}.

\section{Data-driven LMIs for LQR design}

Beyond the stabilization problem, the idea in the previous section can be also applied to LQR design problems.
We first consider a policy evaluation problem again. Given a potentially unknown state-feedback gain $F$, suppose that we only have an access to the state-input trajectories. Under this situation, the problem is to determine the LQR performance of the unknown $F$.
\begin{proposition}[Performance evaluation]\label{thm:LQR-evaluation}
Consider the optimization problem
\begin{align}
&\mathop {{\rm{min}}}\limits_{P \in {\mathbb S}^{n + m} } \,\,{\bf Tr}\left( {\Lambda S(F)PS(F)} \right)\label{eq:4}\\
&{\rm subject}\,\,{\rm{to}}\quad H(F)^T PH(F) + I \preceq S(F)PS(F)\nonumber
\end{align}
and $\bar P\in {\mathbb S}^{n+m}$ is the corresponding optimal point. Then, the optimal objective function value~\eqref{eq:4} is the cost corresponding to $F$, i.e., ${\bf Tr}\left( {\Lambda S(F)\bar PS(F)} \right) = J(F)$.
\end{proposition}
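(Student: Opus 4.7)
The plan is to introduce a congruence-type change of variable that absorbs the data matrices into a single Lyapunov matrix, apply~\cref{lemma:S-H2} to eliminate $H(F)$, and then recognize the resulting LMI as a textbook Lyapunov-minimization whose optimal value matches $J(F)$.

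First I would set $W := S(F)\,P\,S(F)$. Since $S(F)\succ 0$ by the data validity lemma for~\cref{algo:data-collection2}, this defines a bijection on $\mathbb{S}^{n+m}$, so no generality is lost. By~\cref{lemma:S-H2} we have $H(F)=S(F)A_F^T$, and therefore $H(F)^T P H(F) = A_F S(F) P S(F) A_F^T = A_F W A_F^T$, while the objective becomes ${\bf Tr}(\Lambda W)$. The problem reduces to
\[
\min_{W \in \mathbb{S}^{n+m}} {\bf Tr}(\Lambda W) \quad \text{subject to}\quad A_F W A_F^T + I \preceq W.
\]

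Next I would invoke standard Lyapunov theory: this LMI is feasible if and only if $A_F$ is Schur, which by~\cref{lemma:spetral-radius-lemma} is precisely the stabilizability of $F$. Among all feasible $W$, the smallest (in the positive semidefinite order) is $\bar{W}:=\sum_{k=0}^{\infty}A_F^k (A_F^T)^k$, the unique solution of the equality $A_F \bar{W}A_F^T + I = \bar{W}$. Because $\Lambda \succeq 0$, the linear functional ${\bf Tr}(\Lambda\,\cdot\,)$ is monotone in this order, so the minimum is attained at $W=\bar W$ with optimal value ${\bf Tr}(\Lambda \bar W)$, and the corresponding optimizer is $\bar{P}=S(F)^{-1}\bar{W}S(F)^{-1}$.

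The final step, which is also the main obstacle, is to identify ${\bf Tr}(\Lambda \bar W)$ with $J(F)$. Here I would exploit the block factorization $A_F = E\,[\,A\ B\,]$ where $E:=\begin{bmatrix} I \\ F \end{bmatrix}$, which gives $A_F^k E = E(A+BF)^k$ and $E^T\Lambda E = Q+F^T R F$. By SDP duality the primal value ${\bf Tr}(\Lambda \bar W)$ equals ${\bf Tr}(\bar Z)$, where $\bar Z$ solves the dual Lyapunov equation $A_F^T \bar Z A_F + \Lambda = \bar Z$; combined with $A_F^k E = E(A+BF)^k$ this reduces the computation to the classical $n\times n$ LQR Lyapunov equation $(A+BF)^T X_F(A+BF) + Q + F^T R F = X_F$, whose trace is $J(F)$ since $J(F)=\sum_{i=1}^n e_i^T X_F e_i = {\bf Tr}(X_F)$. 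The delicate part is the bookkeeping that passes between the augmented Lyapunov solution on $\mathbb{R}^{(n+m)\times(n+m)}$ and the classical LQR quantity on $\mathbb{R}^{n\times n}$: one must handle the $k=0$ term of the Neumann-type series and use cyclic invariance of the trace carefully in order to land on the exact equality ${\bf Tr}(\Lambda \bar W) = {\bf Tr}(X_F) = J(F)$.
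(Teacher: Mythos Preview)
Your reduction via $W=S(F)PS(F)$ and \cref{lemma:S-H2} to the Lyapunov problem $\min\{{\bf Tr}(\Lambda W):A_FWA_F^T+I\preceq W\}$, with minimizer $\bar W=\sum_{k\ge 0}A_F^k(A_F^T)^k$, is exactly the paper's argument, only carried out in a cleaner order (the paper first fixes the optimizer and then sandwiches it between the feasible equality solution $\hat P$ and the recursive lower bound; you do the change of variable up front).

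The gap is in your last step, and it is real rather than ``delicate bookkeeping.'' Your target identity ${\bf Tr}(\Lambda\bar W)={\bf Tr}(\bar Z)={\bf Tr}(X_F)$ does not hold: $\bar Z=\sum_{k\ge 0}(A_F^T)^k\Lambda A_F^k$ lives in $\mathbb{S}^{n+m}$, and with $E=\begin{bmatrix}I\\ F\end{bmatrix}$ one has $J(F)=\sum_{i=1}^n (Ee_i)^T\bar Z(Ee_i)={\bf Tr}(\bar Z\,EE^T)$, which is \emph{not} ${\bf Tr}(\bar Z)$ because $EE^T\neq I_{n+m}$; no amount of trace cyclicity collapses the two. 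Concretely, for $n=m=1$, $A=0$, $B=1$, $F=0$ one gets $\bar W=\mathrm{diag}(2,1)$, hence ${\bf Tr}(\Lambda\bar W)=2Q+R$, while $J(F)=Q$. The paper's own proof hits the same point and writes ${\bf Tr}\bigl(\sum_k(A_F^T)^k\Lambda A_F^k\bigr)=\sum_{i=1}^{n}\sum_k e_i^T(A_F^T)^k\Lambda A_F^k e_i$, silently discarding the last $m$ diagonal entries of an $(n+m)\times(n+m)$ trace; so that step is not justified there either, and the stated equality ${\bf Tr}(\Lambda S(F)\bar PS(F))=J(F)$ appears not to hold as written.
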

\begin{proof}
The optimal solution $\bar P$ satisfies
\[
H(F)^T \bar PH(F) + I \preceq S(F) \bar PS(F),\quad P \succ 0.
\]
Using~\cref{lemma:S-H2} and letting $\tilde P = S(F) \bar PS(F)$, it follows that
\begin{align}
A_F \tilde PA_F^T  + I \preceq \tilde P,\quad \tilde P \succ 0.\label{eq:9}
\end{align}

Since the above inequality is a Lyapunov inequality, $A_F$ is Scuhr. Therefore, there exists $\hat P \in {\mathbb S}_{++}$ such that $A_F \hat PA_F^T  + I = \hat P$, where $\hat P: = \sum_{k = 0}^\infty  {A_F^k (A_F^T )^k }$. Replacing $\hat P$ with $S(F) M S(F)$, where $M = S(F)^{-1} \hat P S(F)^{-1}$, we can see that $M$ satisfies $H(F)^T MH(F) + I = S(F)MS(F)$. This implies that $M$ is a feasible point for~\eqref{eq:4}. Therefore,
\begin{align*}
{\bf Tr}\left( {\Lambda S(F)\bar PS(F)} \right) \leq & {\bf Tr}(\Lambda S(F)MS(F))= {\bf Tr} ( \Lambda \hat P)
\end{align*}
On the other hand, repeatedly applying the inequality~\eqref{eq:9} yields
\[
\sum\limits_{k = 0}^\infty  {A_k^k (A_F^T )^k } \preceq \tilde P = S(F)\bar PS(F)
\]
by which we have ${\bf Tr}(\Lambda S(F)\bar PS(F)) \ge {\bf Tr}(\Lambda \hat P)$, implying ${\bf Tr} (\Lambda S(F)\bar PS(F)) = {\bf Tr} (\Lambda \hat P)$.
Then, we can conclude
\begin{align*}
{\bf Tr}\left( {\Lambda S(F)\bar PS(F)} \right)=& {\bf Tr}(\Lambda \hat P)\\
=& {\bf Tr} \left( {\sum\limits_{k = 0}^\infty  {(A_F^T )^k \Lambda A_F^k } } \right)\\
=& \sum\limits_{i = 1}^n {\sum\limits_{k = 0}^\infty  {e_i^T (A_F^T )^k \Lambda A_F^k e_i } }\\
=& J(F)
\end{align*}
This completes the proof.
\end{proof}

Next, the LQR design problem is addressed using a data-driven LMI. The following LMI condition allows us to design an LQR control of unknown system in a simple and efficient way.
\begin{proposition}[LQR design]\label{thm:LQR-design}
Consider the optimization problem
\begin{align}
&\mathop {{\rm{min}}}_{P \in {\mathbb S}^{n + m} ,\,G \in {\mathbb R}^{n \times n} ,\,X \in {\mathbb R}^{n \times m} } \,\,{\bf Tr}\left( {\Lambda SPS} \right)\label{eq:5}\\
&{\rm{subject}}\,\,{\rm{to}}\,\,\,\left[ {\begin{array}{*{20}c}
   { - SPS + I} & *  \\
   {\left[ {\begin{array}{*{20}c}
   G & X  \\
\end{array}} \right]} & {H^T PH - G - G^T }  \\
\end{array}} \right] \prec 0\nonumber
\end{align}
and $\bar G\in {\mathbb R}^{n \times n}$, $\bar P \in {\mathbb S}^{n+m}$, and ,$\bar Y \in {\mathbb R}^{n \times m}$ are the corresponding optimal points. Then, the optimal objective function value upper bounds the optimal cost, $J(F^*)$, and the corresponding state-feedback gain is given by $\bar F = \bar X^T (\bar G^T )^{ - 1}$,
\end{proposition}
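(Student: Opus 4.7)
The plan is to adapt the Finsler-based equivalence from the proof of~\cref{proposition:stab-design} to account for the extra $+I$ term in the $(1,1)$ block of the LMI, and to combine it with the Lyapunov-iteration argument from the proof of~\cref{thm:LQR-evaluation}.

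First, I would parallel the derivation of~\cref{proposition:stab-design} almost verbatim. Start from the strict Lyapunov inequality
\[
A_F \tilde P A_F^T + I \prec \tilde P,\quad \tilde P \succ 0,
\]
rewrite it in the structured quadratic form used there---the only change being that $-\tilde P$ is replaced by $-\tilde P + I$ in the $(1,1)$ block---apply~\cref{lemma:Finsler} with a structured multiplier $G$, perform the change of variables $X = GF^T$, and substitute $\tilde P = SPS$ while invoking~\cref{lemma:S-H} to rewrite $\begin{bmatrix} A^T \\ B^T \end{bmatrix}^T \tilde P \begin{bmatrix} A^T \\ B^T \end{bmatrix} = H^T P H$. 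This yields exactly the LMI in the statement and shows that feasibility is equivalent to the existence of $\tilde P = SPS \succ 0$ and $F = X^T(G^T)^{-1}$ satisfying the Lyapunov inequality above; in particular every feasible point produces a stabilizing gain via~\cref{lemma:spetral-radius-lemma}.

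Second, because $A_F$ is Schur, iterating $\tilde P \succ A_F \tilde P A_F^T + I$ yields $\tilde P \succeq \sum_{k=0}^{\infty} A_F^k (A_F^T)^k$. Reusing the closing trace expansion from the proof of~\cref{thm:LQR-evaluation} then gives
\[
{\bf Tr}(\Lambda SPS) = {\bf Tr}(\Lambda \tilde P) \geq {\bf Tr}\!\left( \Lambda \sum_{k=0}^{\infty} A_F^k (A_F^T)^k \right) = J(F) \geq J(F^*).
\]
Since this holds at every feasible point, the optimum upper-bounds $J(F^*)$, and evaluating at $(\bar P, \bar G, \bar X)$ supplies the controller $\bar F = \bar X^T (\bar G^T)^{-1}$.

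The main obstacle is the Finsler step: as in~\cref{proposition:stab-design}, one has to verify that restricting the Finsler multiplier to the block form $\begin{bmatrix} 0 \\ G \end{bmatrix}$ preserves necessity (not just sufficiency), and that the change of variables $P \mapsto SPS$ is bijective on the relevant cone---this uses $S \succ 0$ granted by~\cref{assumption:persistent-excitation}. The auxiliary bookkeeping---that $\bar G$ is invertible, forced by the $(2,2)$ block $H^T P H - G - G^T \prec 0$ together with $P \succ 0$ (the latter implied by the $(1,1)$ block and $S \succ 0$), giving $G + G^T \succ H^T P H \succeq 0$---and the strict-versus-nonstrict passage in the Lyapunov iteration are standard.
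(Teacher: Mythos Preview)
Your proposal is correct and follows essentially the same route as the paper: reduce the LMI via the Finsler/change-of-variables argument of \cref{proposition:stab-design} (with the extra $+I$ in the $(1,1)$ block) to the Lyapunov inequality $A_F\tilde P A_F^T + I \prec \tilde P$, then iterate and take the trace against $\Lambda$ as in \cref{thm:LQR-evaluation} to obtain ${\bf Tr}(\Lambda S\bar PS)\geq J(\bar F)\geq J(F^*)$. Your write-up is in fact slightly more explicit than the paper's in spelling out the final inequality $J(\bar F)\geq J(F^*)$ and the invertibility of $\bar G$.
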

\begin{proof}
The LMI constraint is identical to the stabilization case. Therefore, we can follow the same procedure to arrive that the conclusion that the optimization is equivalent to the following optimization:
\begin{align}
&\mathop {{\rm{min}}}_{P \in {\mathbb S}^{n + m} ,\,F \in {\mathbb R}^{m \times n} } \,\, {\bf Tr} \left( {\Lambda P} \right)\label{eq:3}\\
&{\rm{subject}}\,\,{\rm{to}}\quad \left[ {\begin{array}{*{20}c}
   I  \\
   F  \\
\end{array}} \right]\left( {\left[ {\begin{array}{*{20}c}
   {A^T }  \\
   {B^T }  \\
\end{array}} \right]^T P\left[ {\begin{array}{*{20}c}
   {A^T }  \\
   {B^T }  \\
\end{array}} \right]} \right)\left[ {\begin{array}{*{20}c}
   I  \\
   F  \\
\end{array}} \right]^T + I \prec P\nonumber
\end{align}
Therefore, $(S\bar P S, \bar F)$ is an optimal solution of the above problem.
Applying the inequality recursively leads to
\[
\sum_{k = 0}^{N - 1} {A_{\bar F}^k (A_{\bar F}^T )^k } \preceq A_{\bar F}^{N - 1} S \bar P S(A_{\bar F}^T )^{N - 1}  + \sum\limits_{k = 0}^{N - 1} {A_{\bar F}^k (A_{\bar F}^T )^k }  \prec S \bar P S
\]

Multiplying with $\Lambda$ and taking the trace on the last inequality, one gets
\begin{align*}
{\bf Tr}(\Lambda S \bar P S) \ge& {\bf Tr}\left( {\Lambda \sum_{k = 0}^{N - 1} {A_{\bar F}^k (A_{\bar F}^T )^k } } \right)\\
=& \sum_{i = 1}^n {\sum_{k = 0}^{N - 1} {x(k;\bar F, e_i )^T \Lambda x(k;\bar F,e_i )} }\\
 = & J(\bar F).
\end{align*}
Taking the limit $N \to \infty$, we obtain the desired conclusion.
\end{proof}

\cref{thm:LQR-design} allows us to design a controller with a guaranteed upper bound on the LQR performance. A natural question arising here is whether or not the obtained controller from~\cref{thm:LQR-design} is optimal. If not, then how far is it away from the optimal gain $F^*$? A potential answer is given in the following result. In particular, to answer this question, one needs to make it clear that the LMI in~\cref{thm:LQR-design} is strict. Note that the LMI needs the strictness to use the Finsler's lemma. Therefore, the feasible set satisfying the LMI constraint is an open set, and therefore, there would be no solution to the optimization in~\cref{thm:LQR-design}. In practice, to find an approximate solution to~\cref{thm:LQR-design}, most LMI solvers try to solve the semi-definite problem with a small margin. For simplicity and convenience, let us start with~\eqref{eq:3}, and consider the modified problem
\begin{align}
&\mathop {{\rm{min}}}_{P \in {\mathbb S}^{n + m} ,\,F \in {\mathbb R}^{m \times n} } \,\,{\bf{Tr}}\left( {\Lambda SPS} \right)\label{eq:6}\\
&{\rm{subject}}\,\,{\rm{to}}\quad \left[ {\begin{array}{*{20}c}
   I  \\
   F  \\
\end{array}} \right]\left( {\left[ {\begin{array}{*{20}c}
   {A^T }  \\
   {B^T }  \\
\end{array}} \right]^T SPS \left[ {\begin{array}{*{20}c}
   {A^T }  \\
   {B^T }  \\
\end{array}} \right]} \right)\left[ {\begin{array}{*{20}c}
   I  \\
   F  \\
\end{array}} \right]^T + I \nonumber \\
& \preceq - \varepsilon I + SPS \nonumber
\end{align}
with a sufficiently small $\varepsilon>0$. Define solution $(G,P,Y) = (\bar G_\varepsilon, \bar P_\varepsilon, \bar Y_\varepsilon)$ to~\eqref{eq:6}. We characterize the solution to~\eqref{eq:5} and equivalently~\eqref{eq:6} as $(G,P,Y) = (\bar G_\varepsilon, \bar P_\varepsilon, \bar Y_\varepsilon)$ in the limit $\varepsilon \to 0$. Based on this definition, we can obtain an optimality of the solution to~\cref{thm:LQR-design}. Indeed, we prove that the feedback gain $\bar F_\varepsilon$ is optimal for any $\varepsilon > 0$.
\begin{proposition}
Suppose that $(G,P,Y) = (\bar G_\varepsilon, \bar P_\varepsilon, \bar Y_\varepsilon)$ is a solution of~\eqref{eq:6}, and let $\bar F_\varepsilon = \bar X_\varepsilon^T (\bar G_\varepsilon^T )^{ - 1}$.
The feedback gain $\bar F_\varepsilon$ is optimal for any $\varepsilon > 0$.
\end{proposition}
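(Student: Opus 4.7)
My plan is to reduce the bilinear SDP~\eqref{eq:6} to a Lyapunov-type problem in $F$ alone and then identify its optimum via a KKT analysis that recovers the LQR algebraic Riccati equation. Applying the same Finsler-lemma manipulations used in the proof of~\cref{thm:LQR-design} (with the substitution $\tilde P := SPS$), problem~\eqref{eq:6} is equivalent to
\[
\min_{\tilde P \succ 0,\,F} \, {\bf Tr}(\Lambda \tilde P) \quad \text{subject to} \quad A_F \tilde P A_F^T + (1+\varepsilon) I \preceq \tilde P.
\]
Since $(1+\varepsilon) I \succ 0$, this is a slack Lyapunov inequality, so every feasible $F$ makes $A_F$ Schur and hence, by~\cref{lemma:spetral-radius-lemma}, stabilizes $(A,B)$.

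For each fixed stabilizing $F$, the objective is Loewner-monotone in $\tilde P$ (as $\Lambda \succeq 0$), so the minimum is attained at the smallest feasible $\tilde P = (1+\varepsilon) W_F$, where $W_F$ uniquely solves $A_F W A_F^T + I = W$. Substituting and using the factorization $A_F = \begin{bmatrix} I \\ F \end{bmatrix}\begin{bmatrix} A & B \end{bmatrix}$ together with the standard trace duality for paired Lyapunov equations, one can derive
\[
{\bf Tr}(\Lambda W_F) = {\bf Tr}(\Lambda) + {\bf Tr}(X_F M_0), \qquad M_0 := AA^T + BB^T,
\]
where $X_F$ is the closed-loop cost-to-go matrix solving $(A+BF)^T X_F (A+BF) + Q + F^T R F = X_F$. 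Thus the outer minimization in $F$ is essentially a weighted LQR cost, and the classical comparison $X_F \succeq X^*$ under~\cref{assumption:basic-assumption} immediately shows that $F^*$ is \emph{a} minimizer.

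To identify $\bar F_\varepsilon$ uniquely, I would write the KKT stationarity conditions of the reduced bilinear problem. With a dual multiplier $Y \succeq 0$, stationarity in $\tilde P$ yields the Lyapunov equation $Y - A_F^T Y A_F = \Lambda$; expanding this block-wise via the factorization of $A_F$ gives $Y_{12} = A^T Y_F B$ and $Y_{22} = R + B^T Y_F B \succ 0$ (using $R \succ 0$), where $Y_F := \begin{bmatrix} I & F^T \end{bmatrix} Y \begin{bmatrix} I \\ F \end{bmatrix}$. Stationarity in $F$ then forces $F = -(R + B^T Y_F B)^{-1} B^T Y_F A$, and substituting back into the block Lyapunov equation for $Y_F$ produces the discrete ARE. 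Because $F$ is stabilizing,~\cref{assumption:basic-assumption} makes $X^*$ the unique stabilizing solution of that ARE, whence $Y_F = X^*$ and $\bar F_\varepsilon = F^*$. The main obstacle is that the reduced problem is non-convex in $(\tilde P, F)$, so KKT is only necessary in general; this is handled by invoking the Finsler equivalence with the convex SDP~\eqref{eq:6}, which has a global minimum that must correspond to a KKT point of the reduction, collapsing the ambiguity and pinning down every optimizer as $F^*$.
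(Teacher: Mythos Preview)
Your reduction of~\eqref{eq:6} to $\min_{F\in{\cal F}}(1+\varepsilon)\,{\bf Tr}(\Lambda W_F)$ via inner minimization over $\tilde P$ is correct and coincides with the first half of the paper's argument. Where you diverge is in identifying the outer minimizer. The paper never invokes trace duality or KKT: once it has shown that the optimal objective value equals $(1+\varepsilon)J(\bar F_\varepsilon)$, it runs a one-line contradiction---if some $F^*$ achieved strictly smaller cost, then the pair $(P^*,F^*)$ with $SP^*S=(1+\varepsilon)\sum_{k\ge 0}A_{F^*}^k(A_{F^*}^T)^k$ would be feasible for~\eqref{eq:6} with a strictly smaller objective, contradicting optimality of $(\bar P_\varepsilon,\bar F_\varepsilon)$. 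No stationarity analysis, no Riccati manipulation.

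Your route adds two complications, one of which is a genuine gap. First, the trace-duality step turns the outer objective into ${\bf Tr}(X_F M_0)$ with $M_0=AA^T+BB^T$, but stabilizability alone does not force $[A\ \ B]$ to have full row rank, so $M_0$ may be singular; in that case neither the uniqueness step (${\bf Tr}((X_F-X^*)M_0)=0\Rightarrow X_F=X^*$) nor the $F$-stationarity step (which needs $[A\ \ B]\tilde P[A\ \ B]^T$ nonsingular to cancel and obtain $Y_{22}F=-Y_{12}^T$) goes through. Second, your worry that ``KKT is only necessary'' is misplaced: necessity is exactly what you need, since any global minimizer $\bar F_\varepsilon$ must satisfy KKT under a constraint qualification, and your computation then pins it to $F^*$. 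The appeal to the ``Finsler equivalence with the convex SDP~\eqref{eq:6}'' is therefore unnecessary and also confused, since~\eqref{eq:6} is bilinear in $(P,F)$, not convex. In short, your plan works under the extra hypothesis $\mathrm{rank}\,[A\ \ B]=n$, but is considerably heavier than the paper's direct contradiction argument.
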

\begin{proof}
Plugging $(\bar G_\varepsilon, \bar P_\varepsilon, \bar Y_\varepsilon)$ into $(G,P,Y)$ in the constraint~\eqref{eq:6}, we have
\begin{align}
&\left[ {\begin{array}{*{20}c}
   I  \\
   {\bar F_\varepsilon  }  \\
\end{array}} \right]\left( {\left[ {\begin{array}{*{20}c}
   {A^T }  \\
   {B^T }  \\
\end{array}} \right]^T S\bar P_\varepsilon  S\left[ {\begin{array}{*{20}c}
   {A^T }  \\
   {B^T }  \\
\end{array}} \right]} \right)\left[ {\begin{array}{*{20}c}
   I  \\
   {\bar F_\varepsilon  }  \\
\end{array}} \right]^T+ (1+ \varepsilon) I \nonumber\\
\preceq & S\bar P_\varepsilon  S,\label{eq:7}
\end{align}
which is a Lyapunov inequality. Therefore, $A_{\bar F_{\varepsilon}}$ is Schur, and by the Lyapunov theory, there exists a Lyapunov matrix
\[
S \hat P_\varepsilon S  : = (1 + \varepsilon) \sum\limits_{k = 0}^\infty  {A_{\bar F_\varepsilon  }^k (A_{\bar F_\varepsilon  }^T )^k }
\]
such that $A_{\bar F} S \hat P_\varepsilon S  A_{\bar F}^T  + (1+\varepsilon) I = S \hat P_\varepsilon S$. Obviously, $\hat P_\varepsilon$ is a feasible solution to~\eqref{eq:6}, and hence, from the optimality of $\bar P_\varepsilon$, it holds that ${\bf Tr}(\Lambda S\bar P_\varepsilon S) \leq {\bf Tr}(\Lambda S \hat P_\varepsilon S )=(1 + \varepsilon )J(\bar F_\varepsilon  )$. On the other hand, recursively applying~\eqref{eq:7} leads to
\[
S\bar P_\varepsilon  S \succeq (1 + \varepsilon )\sum\limits_{k = 0}^\infty  {A_{\bar F_\varepsilon  }^k (A_{\bar F_\varepsilon  }^T )^k }
\]
and hence, ${\bf Tr}\left( {\Lambda S\bar P_\varepsilon  S} \right) \ge {\bf Tr}(\Lambda S\hat P_\varepsilon  S)=(1 + \varepsilon )J(\bar F_\varepsilon)$. Combining the last two inequalities, we have ${\bf Tr}(\Lambda S\bar P_\varepsilon  S )=(1 + \varepsilon )J(\bar F_\varepsilon)$. By contradiction, assume that there exists an optimal feedback gain $F^* $ such that
\begin{align}
&(1 + \varepsilon )J(\bar F_\varepsilon  ) > (1 + \varepsilon )J(F^* )\label{eq:8}
\end{align}
Then. there exists
\[
SP^*S  : = (1+\varepsilon)\sum\limits_{k = 0}^\infty  {A_{F^* }^k (A_{F^* }^T )^k }
\]
such that $A_{F^* } SP^*S A_{F^* }^T  + (1+\varepsilon)I = SP^*S$. Since $(P,F) = (P^*, F^*)$ is feasible solution to~\eqref{eq:6}, we have ${\bf Tr}(\Lambda S\bar P_\varepsilon  S) \le {\bf Tr}( \Lambda SP^* S)= (1 + \varepsilon) J(F^*)$. Combining the last inequality with~\eqref{eq:8}, we arrive at a contradiction. Therefore, $\bar F_\varepsilon$ is the optimal feedback gain for any $\varepsilon$. This completes the proof.
\end{proof}

\section{Data-driven dynamic programming}
Although the data-driven LMIs in the previous sections are efficient, it is still meaningful to briefly discuss and summarize dynamic programming methods~\cite{bertsekas1996neuro}, which does not depend on LMI solvers. The previous ideas can be extended to dynamic programming summarized in~\cref{algo:policy-iteration} and~\cref{algo:value-iteration}.
\begin{algorithm}[h]
\caption{Data-Driven Policy Iteration}
\begin{algorithmic}[1]
\State Initialize $F_0 = 0$.
\For{$k \in \{0,1,\ldots\}$}
\State Collect data $(S(F_k),H(F_k))={\tt On-Collect}(F_k)$
\State Solve for $P_{k+1}$ the linear equation
\[
H(F_k )^T P_{k+1} H(F_k ) + S(F_k )\Lambda S(F_k ) = S(F_k )P_{k+1} S(F_k )
\]

\State Update $F_{k + 1}  =  - P_{k+1,22}^{ - 1} P_{k+1,12}^T$

\If{$\left\| {P_k  - P_{k + 1} } \right\| \le \varepsilon$}
\State Stop and return $P_{k+1}$ and $F_{k + 1}  =  - P_{k + 1,22}^{ - 1} P_{k + 1,12}^T$
\EndIf

\EndFor

\end{algorithmic}\label{algo:policy-iteration}
\end{algorithm}
\cref{algo:policy-iteration} summarizes a policy iteration algorithm proposed in~\cite{lee2018primal} for completeness. Its convergence was also proved in~\cite{lee2018primal}.
\begin{proposition}[Convergence of~\cref{algo:policy-iteration}, \cite{lee2018primal}]
The iteration $P_k$ in~\cref{algo:policy-iteration} converges to $P^*$ defined in~\eqref{eq:P*}.
\end{proposition}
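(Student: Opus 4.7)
The plan is to reduce the data-driven iteration to the classical model-based (Hewer-type) policy iteration on the $Q$-function and then invoke the known convergence result from \cite{lee2018primal}. The two enabling facts are \cref{lemma:S-H2}, i.e.\ $S(F_k)A_{F_k}^T = H(F_k)$, and the earlier observation that $S(F_k)\succeq I \succ 0$, so $S(F_k)$ is invertible at every iteration.

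First, I would rewrite the linear equation defining $P_{k+1}$. Substituting $H(F_k)=S(F_k)A_{F_k}^T$ turns it into
\[
S(F_k)\,A_{F_k}P_{k+1}A_{F_k}^T\,S(F_k) + S(F_k)\Lambda S(F_k) \;=\; S(F_k)P_{k+1}S(F_k),
\]
and pre- and post-multiplying by $S(F_k)^{-1}$ yields the model-based Lyapunov equation $A_{F_k}P_{k+1}A_{F_k}^T + \Lambda = P_{k+1}$. Expanding the block structure of $A_{F_k}$ and $\Lambda$, this is exactly the Bellman equation for the $Q$-function of the stabilizing policy $u=F_k x$; consequently $P_{k+1}$ is the $Q$-function matrix of $F_k$ and has the block structure of \eqref{eq:P*}, in particular $P_{k+1,22}=R+B^TX_kB \succ 0$, so the update $F_{k+1}=-P_{k+1,22}^{-1}P_{k+1,12}^T$ is well defined.

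Second, I would recognize that $F_{k+1}=-P_{k+1,22}^{-1}P_{k+1,12}^T$ is precisely the greedy policy with respect to $(z,u)\mapsto [z^T\,u^T]P_{k+1}[z^T\,u^T]^T$. Hence the sequence $(F_k,P_{k+1})$ produced by \cref{algo:policy-iteration} coincides exactly, step by step, with the classical Hewer policy iteration for the ARE. The standard convergence argument then applies: each $P_{k+1}$ monotonically decreases (in the Loewner order), each $F_{k+1}$ is again stabilizing, and the iterates converge to the unique positive definite fixed point of the ARE, namely $P^*$ defined in \eqref{eq:P*}. This is exactly the statement proved in \cite{lee2018primal}, which is what the proposition cites.

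The main obstacle is ensuring that \emph{every} $F_k$ is stabilizing, so that \cref{algo:data-collection2} produces bounded trajectories, the Lyapunov equation at step $k$ has a unique positive definite solution, and the monotone Hewer argument can start. For $F_0=0$ this reduces to requiring $A$ to be Schur; in general one either assumes a stabilizing initial gain is available or combines the procedure with \cref{proposition:stab-design} to produce one. Once stabilizability is in place at step $k$, the inductive preservation under the greedy update is the standard Hewer argument, and the remainder of the proof is routine.
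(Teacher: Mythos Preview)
The paper does not actually prove this proposition; it simply attributes both the algorithm and its convergence to \cite{lee2018primal}. Your high-level plan---reduce the data equation to a model-based policy-evaluation equation and then invoke the Hewer-type result from \cite{lee2018primal}---is therefore exactly in the spirit of what the paper does.

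However, your reduction contains a genuine algebraic error that breaks the argument. From \cref{lemma:S-H2} one has $H(F_k)=S(F_k)A_{F_k}^{T}$ with $S(F_k)$ symmetric, hence $H(F_k)^{T}=A_{F_k}S(F_k)$ and
\[
H(F_k)^{T}P_{k+1}H(F_k)=A_{F_k}\,S(F_k)P_{k+1}S(F_k)\,A_{F_k}^{T},
\]
not $S(F_k)A_{F_k}P_{k+1}A_{F_k}^{T}S(F_k)$ as you wrote. Consequently, pre- and post-multiplying the data equation by $S(F_k)^{-1}$ does \emph{not} cancel to $A_{F_k}P_{k+1}A_{F_k}^{T}+\Lambda=P_{k+1}$. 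What one actually obtains is
\[
\bigl(S(F_k)^{-1}A_{F_k}S(F_k)\bigr)\,P_{k+1}\,\bigl(S(F_k)^{-1}A_{F_k}S(F_k)\bigr)^{T}+\Lambda=P_{k+1},
\]
a Lyapunov equation in the similarity-transformed matrix $S(F_k)^{-1}A_{F_k}S(F_k)$, whose solution is in general \emph{not} the $Q$-function kernel of $F_k$; in particular your identification $P_{k+1,22}=R+B^{T}X_kB$ and the ensuing ``this is exactly Hewer's iteration'' step do not follow. Since $S(F_k)$ and $A_{F_k}$ do not commute, this is not a cosmetic slip but a real gap: the link between the data-driven $P_{k+1}$ and the model-based $Q$-function has to be established by a different argument (as done in \cite{lee2018primal}), not by the cancellation you propose.
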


The main feature of~\cref{algo:policy-iteration} is that it uses on-policy data generated by~\cref{algo:data-collection2}.
Therefore, it needs to collect new data at every iterations, and each data collection should apply the exploring starts scheme.
The newly proposed value iteration algorithm presented in~\cref{algo:value-iteration} suggests an off-policy algorithm in the sense that the policy used to generate the data is independent of the policy we want to learn or the intermediate policies while learning. Therefore, it collects data once at the beginning. Moreover, it does not need to stick to the exploring starts scheme because the exploratory inputs can be used during the data collection. In this sense, the new~\cref{algo:value-iteration} is more sample efficient than~\cref{algo:policy-iteration}.
\begin{algorithm}[h]
\caption{Data-Driven Value Iteration}
\begin{algorithmic}[1]
\State Initialize $P_0  = 0$.
\State Given fixed initial state $x(0) = z $, collect data $(S,H)={\tt Off-Collect}(z)$
\For{$k \in \{0,1,\ldots\}$}

\State Solve for $P_{k + 1}$ the linear matrix equation
\[
S P_{k + 1} S  =S \Lambda S  + H(P_{k,11}  - P_{k,12} P_{k,22}^{ - 1} P_{k,12}^T )H^T
\]

\If{$\left\| {P_k  - P_{k + 1} } \right\| \le \varepsilon$}
\State Stop and return $P_{k+1}$ and $F_{k + 1}  =  - P_{k + 1,22}^{ - 1} P_{k + 1,12}^T$
\EndIf

\EndFor

\end{algorithmic}\label{algo:value-iteration}
\end{algorithm}
Multiplying both sides of the linear matrix equation in~\cref{algo:value-iteration} by $S$, it is reduced to
\[
P_{k + 1}  =\Lambda  + S^{ - 1} H(P_{k,11}  - P_{k,12} P_{k,22}^{ - 1} P_{k,12}^T )H^T S^{ - 1}
\]
which can be interpreted as a model-based value iteration because $H^T S^{ - 1}  = \left[ {\begin{array}{*{20}c}
   A & B  \\
\end{array}} \right]$ from~\cref{lemma:S-H}. Lastly, we establish the convergence of~\cref{algo:value-iteration}.
\begin{proposition}[Convergence of~\cref{algo:value-iteration}]
The iteration $P_k$ in~\cref{algo:value-iteration} converges to $P^*$.
\end{proposition}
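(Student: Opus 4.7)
The plan is to reduce this to the classical convergence theorem for the Riccati (or $Q$-value) iteration by first stripping off the data matrices $S$ and $H$. Because Assumption~\ref{assumption:persistent-excitation} gives $S\succ 0$, I may left- and right-multiply the recursion by $S^{-1}$, which puts it in the form
\[
P_{k+1} = \Lambda + S^{-1}H\bigl(P_{k,11} - P_{k,12}P_{k,22}^{-1}P_{k,12}^T\bigr)H^T S^{-1}.
\]
Then Lemma~\ref{lemma:S-H} gives $H^T S^{-1} = [\,A\ \ B\,]$, so the update is exactly the model-based $Q$-value iteration
\[
P_{k+1} = \Lambda + \begin{bmatrix} A^T \\ B^T \end{bmatrix} X_k \begin{bmatrix} A & B \end{bmatrix},\qquad X_k := P_{k,11} - P_{k,12}P_{k,22}^{-1}P_{k,12}^T.
\]
This already removes all dependence on data matrices and reduces the claim to a purely model-based convergence question.

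Next I would read off the block structure of $P_{k+1}$: $P_{k+1,11}=Q+A^T X_k A$, $P_{k+1,12}=A^T X_k B$, $P_{k+1,22}=R+B^T X_k B$. Substituting into the definition of $X_{k+1}$ yields the standard discrete-time Riccati recursion
\[
X_{k+1} = A^T X_k A - A^T X_k B\bigl(R+B^T X_k B\bigr)^{-1}B^T X_k A + Q.
\]
Comparing with the block structure of $P^*$ in~\eqref{eq:P*}, it suffices to show $X_k\to X^*$, because the blocks of $P_k$ are continuous rational functions of $X_{k-1}$ and $X^*$ yields exactly $P^*$.

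The final step is to invoke the classical convergence result for the Riccati iteration under Assumption~\ref{assumption:basic-assumption}: with $(A,B)$ stabilizable and $(A,C)$ detectable (where $Q=C^T C$), starting from any $X_0\succeq 0$, the Riccati iterates are monotonically nondecreasing and converge to the unique positive semidefinite stabilizing solution $X^*$ of the ARE (see, e.g.,~\cite[Prop.~4.4.1]{bertsekas2005dynamic}). I need to check that the initialization $P_0=0$ fits into this framework: although $P_{0,22}=0$ is singular, we also have $P_{0,12}=0$, so interpreting $P_{0,12}P_{0,22}^{-1}P_{0,12}^T$ as $0$ gives $X_0=0\succeq 0$ and $P_1=\Lambda$, after which $P_{k,22}=R+B^T X_{k-1}B\succ 0$ and the recursion is well-defined for all $k\ge 1$.

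The main obstacle is really the first step: carefully justifying the reduction to the model-based form despite the algorithm never using $(A,B)$ explicitly, and handling the degenerate $k=0$ case so that the classical Riccati convergence theorem applies verbatim. Once the iteration is rewritten in the Riccati form above, the convergence $X_k\to X^*$ and hence $P_k\to P^*$ is standard and can be quoted directly.
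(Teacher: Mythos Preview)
Your proposal is correct and follows essentially the same approach as the paper: both strip off $S$ via $S^{-1}$, invoke Lemma~\ref{lemma:S-H} to recover $[A\ B]$, and reduce the recursion to a known model-based iteration whose convergence is cited. The only cosmetic difference is that the paper phrases the resulting update as the $Q$-value iteration $Q_{k+1}(x,u)=[x;u]^T\Lambda[x;u]+\min_v Q_k(Ax+Bu,v)$ and cites its convergence directly, whereas you take one extra (equivalent) step to extract the Riccati recursion for $X_k$ and cite the ARE convergence theorem; your explicit handling of the degenerate $P_0=0$ case is a detail the paper omits.
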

\begin{proof}
We only need to prove that~\cref{algo:value-iteration} is equivalent to the Q-value iteration, which is known to converge to the optimal $P^*$~\cite{bertsekas1996neuro}. Applying~\cref{lemma:S-H} and multiplying both sides of the $P$-update equation in~\cref{algo:value-iteration} by $S^{-1}$, we obtain
\begin{align*}
P_{k + 1}  =& \Lambda+ \left[ {\begin{array}{*{20}c}
   A & B  \\
   { - P_{k,22}^{ - 1} P_{k,12} A} & { - P_{k,22}^{ - 1} P_{k,12} B}  \\
\end{array}} \right]^T\\
& \times P_k \left[ {\begin{array}{*{20}c}
   A & B  \\
   { - P_{k,22}^{ - 1} P_{k,12} A} & { - P_{k,22}^{ - 1} P_{k,12} B}  \\
\end{array}} \right]
\end{align*}
Multiplying both sides by $\left[ {\begin{array}{*{20}c}
   x  \\
   u  \\
\end{array}} \right]$ from the right and its transpose from the left, we have
\[
Q_{k + 1} (x,u) = \left[ {\begin{array}{*{20}c}
   x  \\
   u  \\
\end{array}} \right]^T \Lambda \left[ {\begin{array}{*{20}c}
   x  \\
   u  \\
\end{array}} \right] + \min _{v \in R^n } Q_k (x,v)
\]
with $Q_k (x,u) = \left[ {\begin{array}{*{20}c}
   x  \\
   u  \\
\end{array}} \right]^T P_k \left[ {\begin{array}{*{20}c}
   x  \\
   u  \\
\end{array}} \right]$. It is equivalent to the Q-value iteration~\cite{bertsekas2005dynamic}, which is known to converge to $P^*$, where $Q^* (x,u) = \left[ {\begin{array}{*{20}c}
   x  \\
   u  \\
\end{array}} \right]^T P^* \left[ {\begin{array}{*{20}c}
   x  \\
   u  \\
\end{array}} \right]$. This completes the proof.
\end{proof}

\section{Exploration schemes}

For the on-policy data collection,~\cref{algo:data-collection2}, the exploring starts always guarantee $S(F) \succ 0$. However, collecting the trajectories with different initial points which span ${\mathbb R}^n$ may not be tractable in practice. The off-policy data collection,~\cref{algo:data-collection1}, is relevantly more promising in this respect, because it can use the exploratory inputs while generating the trajectories, and can be used in the case that the initial state is given and fixed. We can apply an arbitrary inputs, $u(k)$, and expect that $S \succ 0$ eventually under the persistent excitation assumption. A standard exploration strategy is to inject the i.i.d. Gaussian noises, $u(k) \sim {\cal N}(0,U)$, where $U \in {\mathbb S}^{m}_{++}$ is the covariance matrix. If trajectories starting from the fixed $x(0) =z$ can be collected as many as possible, then we can develop a new version of the off-policy exploration strategy given in~\cref{algo:data-collection3}, which offers theoretical guarantees of the data validity under a mild assumption, i.e., the controllability.
\begin{algorithm}[h]
\caption{Off-policy data collection $(S,H)={\tt Off-Collect2}(z)$ with restarting}
\begin{algorithmic}[1]
\State Initialize $S_0  = 0,H_0  = 0$.

\For{$i \in \{1,2,\ldots, N\}$}
\State Initialize $x(0;i)=z$.
\State Initialize $\tilde S_{0;i}  = 0,\tilde H_{0;i}  = 0$.
\For{$k \in \{0,1,\ldots, n-1\}$}
\State Apply control input $u(k;i)=\zeta(k;i), \zeta(k;i) \sim {\cal N}(0,U)$
\State Observe $x(k+1;i)$

\State Update
\[
\tilde S_{k + 1;i}  \leftarrow \tilde S_{k;i}  + \left[ {\begin{array}{*{20}c}
   {x(k;i)}  \\
   {u(k;i)}  \\
\end{array}} \right]\left[ {\begin{array}{*{20}c}
   {x(k;i)}  \\
   {u(k;i)}  \\
\end{array}} \right]^T
\]
\[
\tilde H_{k + 1;i}  \leftarrow \tilde H_{k;i}  + \left[ {\begin{array}{*{20}c}
   {x(k;i)}  \\
   {u(k;i)}  \\
\end{array}} \right]x(k + 1;i)^T
\]

\EndFor

\State Update
\[
S_{i + 1}  \leftarrow \frac{i}{{i + 1}} S_i  + \frac{1}{{i + 1}} \tilde S_{n;i}
\]
\[
H_{i + 1}  \leftarrow \frac{i}{{i + 1}} H_i  + \frac{1}{{i + 1}} \tilde H_{n;i}
\]

\EndFor

\State Return $(S,H) = (S_{N}, H_{N})$

\end{algorithmic}\label{algo:data-collection3}
\end{algorithm}

In~\cref{algo:data-collection3}, $N$ trajectories are collected and then averaged, i.e., $S_N  = \frac{1}{N}\sum_{i = 1}^N {\tilde S_{n;i} } ,H_N  = \frac{1}{N}\sum_{i = 1}^N {\tilde H_{n;i} }$. Each trajectory starts from $x(0) = z$ which is fixed. We can readily prove that the data matrices from~\cref{algo:data-collection3} also satisfies the data transformation property~\cref{lemma:S-H}. We can also prove that if $(A,B)$ is controllable, then the data collection strategy guarantees that $S_{N}$ converges to a strictly positive definite matrix with probability one as $N \to \infty$.
\begin{theorem}\label{thm:exploration}
Suppose that $(A,B)$ is controllable, and consider~\cref{algo:data-collection3}, whose output is
\[
S_N  = \frac{1}{N}\sum\limits_{i = 1}^N {\left( {\sum\limits_{k = 0}^n {\left[ {\begin{array}{*{20}c}
   {x(k;i)}  \\
   {u(k;i)}  \\
\end{array}} \right]\left[ {\begin{array}{*{20}c}
   {x(k;i)}  \\
   {u(k;i)}  \\
\end{array}} \right]^T } } \right)}
\]
where $x(k;i)$ and $u(k;i)$ stand for the state and input at time $k$ at the $i$th outer iteration.
Then, we have
\[
{\mathbb P}\left[ {\mathop {\lim }\limits_{N \to \infty } S_N \succ 0} \right] = 1
\]
\end{theorem}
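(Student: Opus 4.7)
The plan is to reduce the statement to the positive definiteness of a single expectation $\bar S := \mathbb{E}[\tilde S_{n;1}]$ via the strong law of large numbers, and then to exploit controllability together with the fact that each $u(k;i)$ is an independent nondegenerate Gaussian.

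First I would observe that, by construction, the trajectories across the outer index $i$ are i.i.d.: each starts from the same $x(0;i)=z$, and the excitation inputs $\{u(k;i)\}_{k,i}$ are i.i.d.\ $\mathcal{N}(0,U)$. Hence the random matrices $\tilde S_{n;1},\tilde S_{n;2},\ldots$ are i.i.d.; their entries are finite sums of products of Gaussians and so have finite mean. The strong law of large numbers therefore gives $S_N = \tfrac{1}{N}\sum_{i=1}^N \tilde S_{n;i} \to \bar S$ entrywise almost surely. Since the set $\{M : M \succ 0\}$ is open, it suffices to show $\bar S \succ 0$; then $\lambda_{\min}(S_N)\to\lambda_{\min}(\bar S)>0$ on a probability-one event.

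The substantive step is the strict positivity of $\bar S$. Suppose for contradiction there is a nonzero $w = \bigl[\begin{smallmatrix}w_x\\ w_u\end{smallmatrix}\bigr]$ with $w^T\bar S w = 0$. Then for each time step $k$, $w_x^T x(k;1) + w_u^T u(k;1) = 0$ almost surely. The key is that $u(k;1)\sim\mathcal{N}(0,U)$ is independent of $x(k;1)$ (which is measurable with respect to $u(0;1),\ldots,u(k-1;1)$), so the variance of the left-hand side is $\operatorname{Var}(w_x^T x(k;1)) + w_u^T U w_u$, and each term must vanish. Since $U\succ 0$, this forces $w_u = 0$, and then $w_x^T x(k;1) = 0$ a.s.\ for every $k$.

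Now I would run an induction on $k$ using the closed-form $x(k;1) = A^k z + \sum_{j=0}^{k-1}A^{k-1-j}B u(j;1)$. At $k=0$ this gives $w_x^T z = 0$. Given $w_x^T A^\ell B = 0$ for $\ell<k$, the condition $w_x^T x(k;1)=0$ a.s.\ reduces to $w_x^T A^{k-1}B\,u(0;1) + w_x^T A^k z = 0$ a.s.\ after absorbing earlier zero terms; by nondegeneracy of $u(0;1)$, both $w_x^T A^{k-1}B = 0$ and $w_x^T A^k z = 0$. Iterating through $k=1,\ldots,n$ yields $w_x^T [B,AB,\ldots,A^{n-1}B]=0$. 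Controllability of $(A,B)$ forces $w_x=0$, contradicting $w\neq 0$. Hence $\bar S\succ 0$, and the claim follows from the SLLN step above.

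The main obstacle is the Gaussian/controllability argument for $\bar S\succ 0$; in particular, carefully decoupling $u(k;1)$ from $x(k;1)$ at each step to extract one new column $A^{k-1}B$ of the controllability matrix per time step, and ensuring that the inner loop is run over enough steps (the theorem sum $\sum_{k=0}^{n}$ supplies the required $n$ independent increments to exhaust $[B,AB,\ldots,A^{n-1}B]$). The SLLN reduction and the openness of $\{M\succ 0\}$ are routine.
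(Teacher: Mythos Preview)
Your proof is correct and shares the paper's overall strategy: recognize the inner-loop matrices $\tilde S_{n;i}$ as i.i.d.\ with finite mean $\bar S$, apply the strong law of large numbers, and then show $\bar S\succ 0$ using controllability. The only difference is how $\bar S\succ 0$ is established. The paper computes $\bar S$ explicitly: since $u(k;i)$ is zero-mean and independent of $x(k;i)$, the off-diagonal blocks of the expected outer product vanish, and the diagonal blocks at step $k$ are $A^k zz^T(A^T)^k + \mathcal{O}_k\,\mathcal{U}_k\,\mathcal{O}_k^T$ and $U$, where $\mathcal{O}_k=[\,B\;AB\;\cdots\;A^{k-1}B\,]$ and $\mathcal{U}_k=\mathrm{diag}(U,\ldots,U)$; at $k=n$ the term $\mathcal{O}_n\,\mathcal{U}_n\,\mathcal{O}_n^T$ is positive definite by controllability, so $\bar S\succ 0$ follows at a glance. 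You argue the same fact by contradiction, peeling off one column $A^{k-1}B$ of the controllability matrix per time step via the nondegeneracy of $u(0;1)$. Both routes rest on exactly the same two ingredients (independence of $u(k)$ from the past and full row rank of $\mathcal{O}_n$); the paper's direct computation is shorter and makes the controllability Gramian appear explicitly, while your inductive argument avoids writing down $\bar S$ and highlights how each additional time step in the inner loop contributes one new direction.
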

\begin{proof}
Define
\begin{align}
{\cal O}_k : =& \left[ {\begin{array}{*{20}c}
   B & {AB} &  \cdots  & {A^{k - 1} B}  \\
\end{array}} \right]\nonumber\\
{\cal U}_k : =& {\rm{diag}}(\underbrace {U, \ldots ,U}_{\rm k-times})\label{eq:13}
\end{align}
and
\begin{align}
u_{k;i} : = \left[ {\begin{array}{*{20}c}
   {\zeta(k - 1;i)}  \\
    \vdots   \\
   {\zeta(1;i)}  \\
   {\zeta(0;i)}  \\
\end{array}} \right]\label{eq:12}
\end{align}
Then, $x(k;i)$ is expressed as $x(k;i) = A^k z + {\cal O}_k u_{k;i}$, and thus
\[
x(k;i)x(k;i)^T  = A^k z^T z (A^T )^k  + 2A^k zu_{k;i}^T {\cal O}_k^T  + {\cal O}_k u_{k;i} u_{k;i}^T {\cal O}_k^T.
\]
Taking the expectation leads to
\[
{\mathbb E}[x(k;i)x(k;i)^T] = A^k z z^T (A^T )^k  + {\cal O}_k {\cal U}_k {\cal O}_k^T
\]
At $k = n$, ${\cal O}_n$ is the controllability matrix, and it is full row rank due to the controllability in~\cref{assumption:basic-assumption}. Since ${\cal U}_k \succ 0$, one concludes ${\mathbb E}[x(n)x(n)^T] \succ 0$.
Since $u(k;i)$ is i.i.d. and the initial state is reset periodically after $n$ steps, $S_{N}$ is written as
\[
S_N  = \frac{1}{N}\sum\limits_{i = 1}^N {\sum\limits_{k = 0}^n {\left[ {\begin{array}{*{20}c}
   {x(k;i)}  \\
   {u(k;i)}  \\
\end{array}} \right]\left[ {\begin{array}{*{20}c}
   {x(k;i)}  \\
   {u(k;i)}  \\
\end{array}} \right]^T } }  = \frac{1}{N}\sum\limits_{i = 1}^N {M_i }
\]
where
\[
M_i  = \sum\limits_{k = 0}^n {\left[ {\begin{array}{*{20}c}
   {x(k;i)}  \\
   {u(k;i)}  \\
\end{array}} \right]\left[ {\begin{array}{*{20}c}
   {x(k;i)}  \\
   {u(k;i)}  \\
\end{array}} \right]^T }
\]
is an i.i.d. random variables with mean
\begin{align*}
&{\mathbb E}[M_i ] = M:=\sum\limits_{k = 0}^n \left[ {\begin{array}{*{20}c}
   {A^k z z^T (A^T )^k  + {\cal O}_k {\cal U}_k {\cal O}_k^T } & 0  \\
   0 & U  \\
\end{array}} \right]\succ  0
\end{align*}
By the strong law of large numbers, we get $
{\mathbb P}\left[ \lim_{N \to \infty } S_{N}  = M \right] = 1$, which leads to the desired conclusion.
\end{proof}

\cref{algo:data-collection3} provides a data collection scheme with theoretical guarantees of the validity of the data. It is useful especially when the exploring starts scheme (starting with arbitrary initial states) is not available. However, it still requires the ability to generate $N$ trajectories from the given initial state $z$. In practice, if only a single trajectory starting from a fixed $z$ is available, we can develop another data acquisition method given in~\cref{algo:data-collection4}. The benefit comes from some cost to pay. In particular, it initially needs a stabilizing state-feedback gain $K$ or at least, the system $A$ itself needs to be stable. In such case, we can approximately mimic the restarting strategy in~\cref{algo:data-collection3} using the stability of the closed-loop system $A+BK$. \cref{algo:data-collection4} will be called the off-policy data collection with periodic excitation.
\begin{algorithm}[h]
\caption{Off-policy data collection $(S,H)={\tt Off-Collect2}(z)$ with periodic excitation}
\begin{algorithmic}[1]
\State Initialize $S_0  = 0,H_0  = 0$.
\State Initialize state $x(0)=z$.

\State Initialize $\varepsilon >0$

\For{$i \in \{1,2,\ldots,N\}$}
\State Initialize time $k=0$
\While{$\left\| { x(k)} \right\| > \varepsilon$}
\State Apply control input $\tilde u(k) = K x(k)$
\State $k \leftarrow k+1$
\EndWhile

\State Initialize $\tilde S_0  = 0,\tilde H_0  = 0$.

\State Initialize time $k=0$ and $x(0;i):= x(0)$
\For{$k \in \{0,1,\ldots,n-1\}$}
\State Apply control input $u(k;i)= Kx(k;i) + \zeta(k;i), \zeta(k;i) \sim {\cal N}(0,U)$
\State Observe $x(k+1;i):= x(k+1)$
\State Update
\[
\tilde S_{k + 1}  \leftarrow \tilde S_k  + \left[ {\begin{array}{*{20}c}
   {x(k;i)}  \\
   {u(k;i)}  \\
\end{array}} \right]\left[ {\begin{array}{*{20}c}
   {x(k;i)}  \\
   {u(k;i)}  \\
\end{array}} \right]^T
\]
\[
\tilde H_{k + 1}  \leftarrow \tilde H_k  + \left[ {\begin{array}{*{20}c}
   {x(k;i)}  \\
   {u(k;i)}  \\
\end{array}} \right]x(k + 1;i)^T
\]
\EndFor

\State Update
\[
S_{i + 1}  \leftarrow \frac{i}{{i + 1}} S_i  + \frac{1}{{i + 1}} \tilde S_n
\]
\[
H_{i + 1}  \leftarrow \frac{i}{{i + 1}} H_i  + \frac{1}{{i + 1}} \tilde H_n
\]

\EndFor

\State Return $(S,H) = (S_N, H_N)$

\end{algorithmic}\label{algo:data-collection4}
\end{algorithm}
The main feature of~\cref{algo:data-collection4} lies in that the process can be interpreted as an alternation of the two phases: the first phase is a settling down period, where the state tends to vanish without the excitation signals in the input $u(k)$. This phase stops when the current state $x(k)$ is sufficiently small in the sense that $\|x(k)\| \le \varepsilon$ for a sufficiently small $\varepsilon>0$. The second phase is an excitation or exploration period, where the state is excited by injecting Gaussian noises in the input. As in~\cref{thm:exploration}, we can prove that~\cref{algo:data-collection4} theoretically ensures the validity of the data output provided that $(A,B)$ is controllable.
\begin{theorem}\label{thm:exploration2}
Suppose that $(A,B)$ is controllable, and consider~\cref{algo:data-collection3}, whose output is
\[
S_N  = \frac{1}{N}\sum\limits_{i = 1}^N {\left( {\sum\limits_{k = 0}^n {\left[ {\begin{array}{*{20}c}
   {x(k;i)}  \\
   {u(k;i)}  \\
\end{array}} \right]\left[ {\begin{array}{*{20}c}
   {x(k;i)}  \\
   {u(k;i)}  \\
\end{array}} \right]^T } } \right)}
\]
where $x(k;i)$ and $u(k;i)$ stand for the state and input, respectively, at time $k$ at the $i$th outer iteration.
Then, there exists a sufficient small $\varepsilon>0$ such that
\[
{\mathbb P}\left[ {\mathop {\lim }\limits_{N \to \infty } S_{N} \succ 0} \right] = 1
\]
\end{theorem}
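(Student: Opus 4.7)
The plan is to adapt the argument of \cref{thm:exploration} to account for the two modifications in \cref{algo:data-collection4}: the initial state $x(0;i)$ of each excitation phase is not exactly $z$ but only satisfies $\|x(0;i)\| \le \varepsilon$ because of the inner while loop, and the control signal during excitation is $u(k;i) = Kx(k;i) + \zeta(k;i)$ rather than pure noise. I first introduce the closed-loop generator $\bar A := A + BK$ and decompose each trajectory as $x(k;i) = \bar A^k x(0;i) + \tilde x(k;i)$, where $\tilde x(k;i) := \sum_{j=0}^{k-1} \bar A^{k-1-j} B \zeta(j;i)$ is the zero-initial-condition noise response. Likewise, $u(k;i) = K \bar A^k x(0;i) + \tilde u(k;i)$ with $\tilde u(k;i) := K\tilde x(k;i) + \zeta(k;i)$. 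Setting $L_k := \begin{bmatrix} \bar A^k \\ K \bar A^k \end{bmatrix}$ and $\tilde v(k;i) := \begin{bmatrix} \tilde x(k;i) \\ \tilde u(k;i) \end{bmatrix}$, the augmented state-input vector becomes $L_k x(0;i) + \tilde v(k;i)$, cleanly separating the history-dependent transient from the fresh, i.i.d.\ noise contribution.

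Next, I would establish $V := \sum_{k=0}^{n} \mathbb{E}[\tilde v(k;i) \tilde v(k;i)^T] \succ 0$. Using independence of $\zeta(k;i)$ and $\tilde x(k;i)$, each summand factors as $T_K \,\mathrm{diag}(\Sigma_k, U)\, T_K^T$ with $T_K := \begin{bmatrix} I & 0 \\ K & I \end{bmatrix}$ nonsingular and $\Sigma_k := \sum_{j=0}^{k-1} \bar A^j B U B^T (\bar A^T)^j$. Since state feedback preserves the column span of the Kalman controllability matrix, $(A,B)$ controllable implies $(\bar A, B)$ controllable, so $\Sigma_n$ (and hence the sum $\sum_k \Sigma_k$) is positive definite; together with $U \succ 0$ and invertibility of $T_K$, this yields $V \succ 0$. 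Expanding $M_i$ with the above decomposition gives $M_i = A_i + (B_i + B_i^T) + C_i$, where $A_i := \sum_k L_k x(0;i) x(0;i)^T L_k^T \succeq 0$ with $\|A_i\| = O(\varepsilon^2)$; $B_i := \sum_k L_k x(0;i) \tilde v(k;i)^T$ is a cross term; and $C_i := \sum_k \tilde v(k;i) \tilde v(k;i)^T$ depends only on $\zeta(\cdot;i)$, hence is i.i.d.\ across $i$ with mean $V$.

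To conclude, I would handle the three pieces separately under the filtration $\mathcal{G}_i := \sigma(\zeta(\cdot;j) : j < i)$, which makes $x(0;i)$ measurable while leaving $\tilde v(k;i)$ independent of $\mathcal{G}_i$: (a) the matrix-valued SLLN applied entrywise gives $\tfrac{1}{N}\sum_i C_i \to V$ almost surely; (b) $\mathbb{E}[B_i \mid \mathcal{G}_i] = 0$ so $\{B_i\}$ is a matrix martingale-difference sequence, and the bound $\|x(0;i)\| \le \varepsilon$ combined with Gaussianity of $\zeta$ gives the uniform estimate $\mathbb{E}[\|B_i\|^2 \mid \mathcal{G}_i] \le C\varepsilon^2$, so an entrywise martingale SLLN yields $\tfrac{1}{N}\sum_i B_i \to 0$ almost surely; (c) $\tfrac{1}{N}\sum_i A_i \succeq 0$ trivially. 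Combining the three gives $\lim_{N\to\infty} S_N \succeq V \succ 0$ almost surely. The main obstacle relative to \cref{thm:exploration} is the loss of i.i.d.\ structure: the transient term $\bar A^k x(0;i)$ carries randomness from the prior settling phase, creating dependence across iterations, and it is the martingale decomposition above that rescues the argument. The role of ``sufficiently small $\varepsilon$'' is that it guarantees termination of the inner while loop with the uniform bound $\|x(0;i)\| \le \varepsilon$ (which needs $\bar A$ Schur, furnished by the stabilizing $K$) and ensures the transient $A_i$ is a benign perturbation on top of $V$.
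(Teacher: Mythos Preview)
Your proposal is correct and takes a genuinely different route from the paper's own proof.

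The paper does not use any martingale machinery. Instead, it attacks the dependence on the random initial point $z_i := x(0;i)$ by purely deterministic matrix inequalities: each cross term of the form $(A+BK)^k z_i\,(\cdot)^T + (\cdot)\,z_i^T((A+BK)^T)^k$ is bounded via \cref{lemma:bounding-lemma} (a Young-type inequality) so that the $z_i$-dependent contributions are absorbed into a fixed negative perturbation of size $O(\varepsilon)$, leaving a residual matrix $M_i$ that depends only on the fresh noise $\zeta(\cdot;i)$ and is therefore genuinely i.i.d.\ across $i$. The ordinary strong law of large numbers then gives $\tfrac{1}{N}\sum_i M_i \to \mathbb{E}[M_i] \succ 0$, and positive definiteness of the limit of $S_N$ follows once $\varepsilon$ is chosen small enough to dominate the deterministic negative correction. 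In contrast, you keep the full decomposition $M_i = A_i + (B_i + B_i^T) + C_i$, observe that $A_i \succeq 0$ can only help, dispose of the cross term $B_i$ by a martingale SLLN (conditioning on $\mathcal{G}_i$), and apply the ordinary SLLN to the i.i.d.\ part $C_i$.

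What the two approaches buy: the paper's argument is more elementary in that it needs only the standard SLLN and \cref{lemma:bounding-lemma}, at the price of somewhat heavier matrix bookkeeping and an essential reliance on $\varepsilon$ being small (the negative correction must be beaten by $\mathbb{E}[M_i]$). Your argument is structurally cleaner and in fact proves a stronger statement than the theorem asserts: since $A_i \succeq 0$ and $\tfrac{1}{N}\sum_i B_i \to 0$ a.s.\ for \emph{every} $\varepsilon>0$, your conclusion $\lim_{N\to\infty} S_N \succeq V \succ 0$ holds for any threshold $\varepsilon>0$, not just sufficiently small ones. The cost is the (mild) appeal to a martingale SLLN for bounded-conditional-variance differences. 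Your remark about the ``role of sufficiently small $\varepsilon$'' is therefore slightly off for your own proof: in your argument $\varepsilon$ only needs to be positive so that the settling phase terminates (guaranteed by $A+BK$ Schur) and $\|x(0;i)\|\le\varepsilon$ holds; the smallness requirement is a feature of the paper's inequality-based route, not yours.
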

\begin{proof}
Define
\begin{align*}
{\cal O}_k : =& \left[ {\begin{array}{*{20}c}
   B & {(A+BK)B} &  \cdots  & {(A+BK)^{k - 1} B}  \\
\end{array}} \right].
\end{align*}

Then, the state at time $k$ is $x(k;i) = (A + BK)^k z_i  + {\cal O}_{k} u_{k;i}$, where $z_i$ is the initial state, $x(0;i) = z_i$ at the $i$th period such that $\left\| {z_i } \right\| \le \varepsilon$, and $u_{k;i}$ is defined in~\eqref{eq:12}. Then, one gets
\begin{align}
x(k;i)x(k;i)^T  =& (A + BK)^k z_i z_i^T ((A + BK)^T )^k\nonumber \\
&+ 2(A + BK)^k z_i u_{k;i}^T {\cal O}_k^T\label{eq:10} \\
& + {\cal O}_k u_{k;i} u_{k;i}^T {\cal O}_k^T\nonumber
\end{align}
which is lower bounded by
\begin{align*}
x(k;i)x(k;i)^T\succeq & (A + BK)^k z_i z_i^T ((A + BK)^T )^k\\
& - (A + BK)^k z_i z_i^T ((A + BK)^T )^k \frac{1}{\varepsilon } \\
&- \varepsilon {\cal O}_k u_{k;i} u_{k;i}^T {\cal O}_k^T  + {\cal O}_k u_{k;i} u_{k;i}^T {\cal O}_k^T
\end{align*}
where~\cref{lemma:bounding-lemma} was applied to~\eqref{eq:10}. Again, the last bound is further bounded from below as
\begin{align*}
&x(k;i)x(k;i)^T\\
\succeq& (A + BK)^k z_i z_i^T ((A + BK)^T )^k\\
&- (A + BK)^k z_i z_i^T ((A + BK)^T )^k \frac{1}{\varepsilon }\\
& - \varepsilon {\cal O}_n u_{k;i} u_{k;i}^T O_k^T  + {\cal O}_k u_{k;i} u_{k;i}^T {\cal O}_k^T\\
\succeq& (A + BK)^k z_i z_i^T ((A + BK)^T )^k\\
&- I\lambda _{\max } ((A + BK)^k z_i z_i^T ((A + BK)^T )^k )\frac{1}{\varepsilon }\\
& + (1 - \varepsilon ){\cal O}_k u_{k;i} u_{k;i}^T {\cal O}_k^T\\
\succeq& (A + BK)^k z_i z_i^T ((A + BK)^T)^k\\
& - I\|(A + BK)^k\|^2 \|z_i\|^2 \frac{1}{\varepsilon} + (1 - \varepsilon ){\cal O}_k u_{k;i} u_{k;i}^T {\cal O}_k^T\\
\succeq& (A + BK)^k z_i z_i^T ((A + BK)^T )^k \\
& - \varepsilon I \|(A + BK)^k\|^2 + (1 - \varepsilon ){\cal O}_k u_{k;i} u_{k;i}^T {\cal O}_k^T
\end{align*}
where $\lambda_{\max} (\cdot)$ denotes the maximum eigenvalue of a symmetric matrix, and the last inequality uses the fact that $\|z_i\| \le \varepsilon$.

On the other hand, noting $x(k;i)u(k;i)^T  = (A + BK)^k z_i u(k)^T  + {\cal O}_k u_k u(k)^T$, we have
\begin{align*}
&\left[ {\begin{array}{*{20}c}
   0 & {x(k;i)u(k;i)^T }  \\
   {u(k;i) x(k;i)^T } & 0  \\
\end{array}} \right]\\
=& \left[ {\begin{array}{*{20}c}
   0 & {(A + BK)^k z_i u(k;i)^T }  \\
   {u(k;i)z_i^T ((A + BK)^T )^k } & 0  \\
\end{array}} \right]\\
& + \left[ {\begin{array}{*{20}c}
   0 & {{\cal O}_k u_k u(k;i)^T }  \\
   {u(k;i) u_k^T {\cal O}_k^T } & 0  \\
\end{array}} \right]
\end{align*}
where the first term on the right-hand side is bounded as
\begin{align}
&\left[ {\begin{array}{*{20}c}
   0 & {(A + BK)^k z_i u(k;i)^T }  \\
   {u(k;i)z_i^T ((A + BK)^T )^k } & 0  \\
\end{array}} \right]\nonumber\\
 =& \left[ {\begin{array}{*{20}c}
   {(A + BK)^k z_i }  \\
   0  \\
\end{array}} \right]\left[ {\begin{array}{*{20}c}
   0  \\
   {u(k;i)}  \\
\end{array}} \right]^T\nonumber\\
&+ \left[ {\begin{array}{*{20}c}
   0  \\
   {u(k;i)}  \\
\end{array}} \right]\left[ {\begin{array}{*{20}c}
   {(A + BK)^k z_i }  \\
   0  \\
\end{array}} \right]^T\nonumber\\
\succeq &  - \varepsilon \left[ {\begin{array}{*{20}c}
   0  \\
   {u(k;i)}  \\
\end{array}} \right]\left[ {\begin{array}{*{20}c}
   0  \\
   {u(k;i)}  \\
\end{array}} \right]^T\nonumber\\
& - \frac{1}{\varepsilon }\left[ {\begin{array}{*{20}c}
   {(A + BK)^k z_i }  \\
   0  \\
\end{array}} \right]\left[ {\begin{array}{*{20}c}
   {(A + BK)^k z_i }  \\
   0  \\
\end{array}} \right]^T\label{eq:11}\\
\succeq &  - \left[ {\begin{array}{*{20}c}
   {\varepsilon ^{ - 1} \lambda _{\max } ((A + BK)^k z_i z_i^T ((A + BK)^T )^k )}  \\
   0  \\
\end{array}} \right.\nonumber\\
&\left. {\begin{array}{*{20}c}
   0  \\
   {\varepsilon u(k;i)u(k;i)^T }  \\
\end{array}} \right]\nonumber\\
=&  - \left[ {\begin{array}{*{20}c}
   {\varepsilon ^{ - 1} \| (A + BK)^k z_i\|^2 I} & 0  \\
   0 & {\varepsilon u(k;i)u(k;i)^T }  \\
\end{array}} \right]\nonumber\\
\succeq&  - \left[ {\begin{array}{*{20}c}
   {\varepsilon \|(A + BK)^k\|^2 I} & 0  \\
   0 & {\varepsilon u(k;i)u(k;i)^T }  \\
\end{array}} \right]\nonumber
\end{align}
where~\eqref{eq:11} is due to~\cref{lemma:bounding-lemma} and the last inequality is due to $\|z_i\| \le \varepsilon$. Combining the two lower bounds, we have
\begin{align*}
&\left[ {\begin{array}{*{20}c}
   {x(k;i)x(k;i)^T } & {x(k;i)u(k;i)^T }  \\
   {u(k;i)x(k;i)^T } & {u(k;i)u(k;i)^T }  \\
\end{array}} \right]\\
\succeq & \left[ {\begin{array}{*{20}c}
   {(A + BK)^k z_i z_i^T ((A + BK)^T )^n  - 2\varepsilon I\|(A + BK)^k\|^2 } & 0  \\
   0 & 0  \\
\end{array}} \right]\\
&+ \underbrace {\left[ {\begin{array}{*{20}c}
   {(1 - \varepsilon) {\cal O}_k u_{k;i} u_{k;i}^T {\cal O}_k^T } & {{\cal O}_k u_{k;i} u(k;i)^T }  \\
   {u(k;i)u_{k;i}^T {\cal O}_k^T } & {(1 - \varepsilon )u(k;i)u(k;i)^T }  \\
\end{array}} \right]}_{ = :M_i }
\end{align*}

Therefore,
\begin{align*}
S_N =& \frac{1}{N}\sum_{i = 1}^N {\left( {\sum\limits_{k = 0}^n {\left[ {\begin{array}{*{20}c}
   {x(k;i)}  \\
   {u(k;i)}  \\
\end{array}} \right]\left[ {\begin{array}{*{20}c}
   {x(k;i)}  \\
   {u(k;i)}  \\
\end{array}} \right]^T } } \right)}\\
\succeq& \frac{1}{N}\sum_{i = 1}^N {\left[ {\begin{array}{*{20}c}
   {x(n;i)}  \\
   {u(n;i)}  \\
\end{array}} \right]\left[ {\begin{array}{*{20}c}
   {x(n;i)}  \\
   {u(n;i)}  \\
\end{array}} \right]^T }\\
\succeq& \left[ {\begin{array}{*{20}c}
   { - 2\varepsilon I \|(A + BK)^k\|^2 } & 0  \\
   0 & 0  \\
\end{array}} \right] + \frac{1}{N}\sum\limits_{i = 1}^N {M_i }
\end{align*}
Since $(M_1,M_2,\ldots,M_N)$ are i.i.d. random variables with mean
\[
{\mathbb E}[M_i ] = \left[ {\begin{array}{*{20}c}
   {(1 - \varepsilon ){\cal O}_n {\cal U}_n {\cal O}_n^T } & 0  \\
   0 & {(1 - \varepsilon )U}  \\
\end{array}} \right]
\]
where ${\cal U}_k$ is defined in~\cref{eq:13}. From the strong law of large numbers, with $\varepsilon \in (0,1)$, we have ${\mathbb P}\left[ \lim_{N \to \infty } \frac{1}{N}\sum_{i = 1}^N {M_i }  > 0 \right] = 1$. Therefore, for a sufficiently small $\varepsilon \in (0,1)$, $S_N$ converges to a positive definite matrix with probability one.
\end{proof}

\section{Conclusion}
We have developed data-driven control evaluation and design strategies based on LMIs and dynamic programming, where stabilization and LQR problems are addressed. Efficient data collection schemes have been investigated. Finally, we investigate exploration schemes to acquire valid data from the trajectories under different scenarios with theoretical guarantees of convergence. In particular, we prove that as more data is accumulated, the collected data becomes valid for the proposed algorithms with higher probability.

\bibliographystyle{IEEEtran}
\bibliography{reference}

\appendices


\end{document}